\documentclass[11pt,a4paper]{article}
\usepackage{tikz}
\usepackage{subfigure}
\usepackage[english]{babel}
\usepackage{graphicx}
\usepackage{extarrows}
\usepackage{setspace}

\usepackage{indentfirst, latexsym, bm, amssymb}

\setlength{\textwidth}{150mm} \setlength{\oddsidemargin}{7mm}
\setlength{\evensidemargin}{7mm} \setlength{\topmargin}{-5mm}
\setlength{\textheight}{245mm} \topmargin -18mm

\begin{document}

\newtheorem{theorem}{Theorem}[section]
\newtheorem{corollary}[theorem]{Corollary}
\newtheorem{definition}[theorem]{Definition}
\newtheorem{conjecture}[theorem]{Conjecture}
\newtheorem{question}[theorem]{Question}
\newtheorem{lemma}[theorem]{Lemma}
\newtheorem{proposition}[theorem]{Proposition}
\newtheorem{example}[theorem]{Example}
\newenvironment{proof}{\noindent {\bf
Proof.}}{\hfill $\square$\par\medskip}
\newcommand{\remark}{\medskip\par\noindent {\bf Remark.~~}}
\newcommand{\pp}{{\it p.}}
\newcommand{\de}{\em}

\title{Extremal graphs for odd wheels\thanks{This work is supported by the National Natural Science Foundation of China (No. 11901554) and Science and Technology Commission
of Shanghai Municipality (No. 18dz2271000, 19jc1420100)}}
\author{ Long-Tu Yuan  \\
{\small School of Mathematical Sciences and Shanghai Key Laboratory of PMMP}\\
{\small East China Normal University}\\
{\small 500 Dongchuan Road, Shanghai, 200240, P.R. China  }\\
{\small Email: ltyuan@math.ecnu.edu.cn}\\
}
\date{}
\maketitle
\begin{abstract}
For a graph $H$, the Tur\'{a}n number of $H$, denoted by ex$(n,H)$, is the maximum number of edges of an $n$-vertex  $H$-free graph. Let $g(n,H)$ denote the maximum number of edges not contained in any monochromatic copy of $H$ in a $2$-edge-coloring of $K_n$. A wheel $W_m$ is a graph formed by connecting a single vertex to all vertices of a cycle of length $m-1$.  The Tur\'{a}n number of $W_{2k}$ was determined by Simonovits in the 1960s.  In this paper, we determine ex$(n,W_{2k+1})$ when $n$ is sufficiently large. We also show that, for sufficiently large $n$,  $g(n,W_{2k+1})=\mbox{ex}(n,W_{2k+1})$ which confirms a conjecture posed by Keevash and Sudakov for odd wheels.
\end{abstract}

{{\bf Key words:} Tur\'{a}n number; Wheels; Decomposition family.}

{{\bf AMS Classifications:} 05C35; 05D99.}
\vskip 0.5cm

\section{Introduction}
Let $H$ be a given graph. We say a graph is $H$-free if it does not contain $H$ as a subgraph. The {\it Tur\'{a}n number} of $H$, denoted by ex$(n,H)$, is the maximum number of edges in an  $n$-vertex $H$-free graph.  An $n$-vertex graph $G$ is an \textit{extremal graph} for $H$ if $G$ is $H$-free with ex$(n,H)$ edges. The famous Tur\'{a}n Theorem \cite{turan1941} states  that the unique extremal $n$-vertex graph for the complete graph $K_{p+1}$ is the balanced complete $p$-partite graph $T(n,p)$.  Let   $t(n,p)$ be the number of edges of $T(n,p)$. For the Tur\'{a}n number of the balanced complete $p$-partite graph, Erd\H{o}s and Stone \cite{erdHos1946} proved the following well-known theorem.

\begin{theorem}(Erd\H{o}s and Stone \cite{erdHos1946}).\label{erdos-stone}
For all integers $p\geq 2$ and $N\geq 1$, and every $\epsilon >0$, there exists an integer $n_0$ such that every graph on $n\geq n_0$ vertices and with at least
\[t(n,p-1)+\epsilon n^2\]
edges contains $T(N,p)$ as a subgraph.
\end{theorem}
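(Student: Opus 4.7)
The plan is to prove Theorem~\ref{erdos-stone} by induction on $p$. The base case $p=2$ asks that any graph on $n$ vertices with at least $\epsilon n^2$ edges contain $K_{N,N}=T(N,2)$ once $n$ is large, and this follows from a Kővári--Sós--Turán type computation: the graph has average degree at least $2\epsilon n$, so by convexity the number of ``cherries'' (paths of length two) is at least roughly $n\binom{2\epsilon n}{2}$, and averaging over the $\binom{n}{N}$ possible common neighborhoods of size $N$ produces a set of $N$ vertices with at least $N$ common neighbors as soon as $n$ is large enough in terms of $N$ and $\epsilon$.

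For the inductive step, assume the theorem for $p-1$ and let $G$ be a graph with $n$ vertices and at least $t(n,p-1)+\epsilon n^2$ edges. Since $t(n,p-1)+\epsilon n^2 > t(n,p-2)+\tfrac{\epsilon}{2}n^2$ once $n$ is large (because $t(n,p-1)-t(n,p-2)=o(n^2)$ is false in general, but $t(n,p-1)\geq t(n,p-2)$, so the inequality holds trivially), I would apply the induction hypothesis with a much larger integer $M\gg N$ in place of $N$ and with $\epsilon/2$ in place of $\epsilon$ to find a copy of $T(M,p-1)$ inside $G$, with vertex classes $V_1,\ldots,V_{p-1}$. The target is to exhibit $N$ extra vertices $u_1,\ldots,u_N$, disjoint from this copy, together with $N$ vertices in each $V_i$ that are joined to every $u_j$; these will constitute the $(p-1)N+N$ vertices of $T(N,p)$.

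To produce the extra vertices I would call a vertex $u\notin V_1\cup\cdots\cup V_{p-1}$ \emph{good} if, for each $i$, it has at least $\alpha M$ neighbors in $V_i$, where $\alpha>0$ is a small constant depending on $\epsilon$ and $p$. A double counting of edges between the remaining $n-(p-1)M$ vertices and the $T(M,p-1)$ copy, combined with the hypothesis that $e(G)\geq t(n,p-1)+\epsilon n^2$ exceeds the maximum possible edge count of a graph in which most vertices are ``bad'', shows that there is still a set $U$ of good vertices of size linear in $n$ (provided $M$ is not too large relative to $n$). For each $u\in U$ and each part $V_i$ choose an $N$-subset $S_i(u)$ of neighbors of $u$ in $V_i$; the number of possible tuples $(S_1(u),\ldots,S_{p-1}(u))$ is bounded by $\binom{M}{N}^{p-1}$, so by pigeonhole, once $|U|>N\binom{M}{N}^{p-1}$, we obtain $N$ vertices in $U$ sharing the same tuple of $N$-sets, which together with those $N$-sets form the desired $T(N,p)$.

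The main difficulty is the bookkeeping of the parameters: $\alpha$ must be chosen small enough that the density argument actually produces a linear-sized $U$, $M$ must be chosen large enough (in terms of $N$, $p$, $\alpha$) for the pigeonhole step to force $N$ matching tuples, the tolerance $\epsilon/2$ passed to the inductive call must absorb the edges lost to ``bad'' vertices, and finally $n_0$ must be large enough for the inductive call (with input $M$ and $\epsilon/2$) and for $U$ to exceed $N\binom{M}{N}^{p-1}$. No conceptual novelty is required beyond this chain of dependencies; the delicate point is simply arranging $n_0\gg M\gg N$ with each inequality quantified in the right order.
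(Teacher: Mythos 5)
The paper states this result as a classical theorem of Erd\H{o}s and Stone and cites \cite{erdHos1946} without giving any proof, so there is no in-paper argument to compare against; I am judging your proposal on its own. The overall architecture --- induction on $p$, a K\H{o}v\'ari--S\'os--Tur\'an count for the base case $p=2$, an inductive call producing a copy of $T(M,p-1)$ with $M\gg N$, and a pigeonhole over the at most ${M \choose N}^{p-1}$ tuples of $N$-subsets --- is the standard one, and the base case and the final pigeonhole step are fine.

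The gap is in the step that is supposed to produce a linear-sized set $U$ of good vertices. Being ``bad'' is a condition on a vertex's adjacencies into one fixed copy of $T(M,p-1)$, which occupies only $(p-1)M=O(1)$ vertices; hence the total number of edges meeting that copy is $O(n)$, and whether every outside vertex is good or every outside vertex is bad changes $e(G)$ by at most $O(n)$. The global hypothesis $e(G)\geq t(n,p-1)+\epsilon n^2$ therefore cannot ``exceed the maximum possible edge count of a graph in which most vertices are bad.'' Concretely, take $T(n,p-1)$ and add an $\epsilon n^2$-edge graph inside one part: this graph has the required number of edges, yet with respect to a copy of $T(M,p-1)$ whose $i$-th class lies inside the $i$-th part, essentially every outside vertex has no neighbour at all in the class contained in its own part, so almost all vertices are bad. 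The missing idea is the minimum-degree reduction: before invoking the induction hypothesis, pass to a still-large subgraph $H$ with $\delta(H)\geq \left(1-\frac{1}{p-1}+\gamma\right)|H|$ for some $\gamma=\gamma(\epsilon,p)>0$ (such an $H$ exists, since otherwise deleting vertices of smaller degree one at a time would remove more edges than $G$ possesses), find the copy of $T(M,p-1)$ inside $H$, and then double count \emph{non-edges} between $V(H)$ and the copy: each vertex of the copy has at most $\left(\frac{1}{p-1}-\gamma\right)|H|$ non-neighbours in $H$, while each bad vertex contributes at least $M-N$ non-edges to the copy, so for $M$ large in terms of $N$ and $\gamma$ the number of bad vertices is at most $(1-\gamma^{\prime})|H|$ and linearly many good vertices remain. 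With that repair, the rest of your argument goes through.
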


In general, Erd\H{o}s and Simonovits \cite{erdHos1966} proved the following theorem.
\begin{theorem}(Erd\H{o}s and Simonovits \cite{erdHos1966}).\label{p+1 chromatic}
Let $\chi(H)$ be the chromatic number of a given graph $H$. For sufficiently large $n$, we have
 \[\emph{ex}(n,H)=\left(1-\frac{1}{\chi(H)-1}\right){n \choose 2}+o(n^2).\]
\end{theorem}

Although the Erd\H{o}s-Stone-Simonovits theorem gives us the asymptotic values of Tur\'{a}n numbers of non-bipartite graphs, in general, it is a challenge to determine the exact values of Tur\'{a}n numbers of non-bipartite graphs.

Let $H$ be a given graph with $\chi(H)=p+1$. If there is an edge $e$ such that $\chi(H-e)=p$, then we say that $H$ is {\it edge-critical.}  Simonovits \cite{Simonovits1968} proved the following result.

\begin{theorem}(Simonovits \cite{Simonovits1968}). \label{Chromatic Critical Edge}
Let $H$ be an edge-critical graph with $\chi(H)=p+1$. Then $T(n,p)$ is the unique  extremal graph for $H$, provided $n$ is sufficiently large.
\end{theorem}

Denote by $C_k$ the cycle of length $k$. A {\it wheel} $W_m$ is a graph formed by connecting a single vertex to all vertices of $C_{m-1}$. If $m$ is odd, then we say that $W_m$ is an odd  wheel. We can define even wheels similarly.
For even $m$, we have $\chi(W_{m})=4$ and $\chi(W_m-e)=3$ for each $e$ in the cycle. By Theorem~\ref{Chromatic Critical Edge}, the unique extremal graph for $W_m$ is $T(n,3)$.   There are only  specific families of graphs whose  extremal graphs are known, see \cite{chen2003,erdHos1995,Liu2013,Simonovits1974.1,Yuan2019+}.

In this paper, we will determine the exact values of Tur\'{a}n numbers of odd wheels. A theorem (Theorem 1 in \cite{Simonovits1974}) of Simonovits states that if the decomposition family (see Section~\ref{decomposition family}) of the forbidden graph $H$ contains a linear forest\footnote{a linear forest is a forest in which each component is a path}, then the extremal graphs for $H$ have simple and symmetric structures (as the theorem is quite complicated, we refer the interested readers to \cite{Simonovits1974} for more information). The decomposition families of  all the forbidden graphs mentioned in \cite{chen2003,erdHos1995,Liu2013,Simonovits1974.1,Yuan2019+} contains a linear forest. As the decomposition family  of $W_{2k+1}$ does not contain a linear forest, it is interesting to determine the exact values of Tur\'{a}n numbers of $W_{2k+1}$.

There is a partial result for the Tur\'{a}n numbers of odd wheels. Dzido and Jastrz\c{e}bski \cite{Dzido2018} determined ex$(n,W_5)$ and ex$(n,W_7)$ for all value of $n$. They also established  a lower bound on ex$(n,W_{2k+1})$ for $k\geq 4$.   We will  show that this lower bound  is the exact value of ex$(n,W_{2k+1})$ for infinitely many $n$. Let $n$, $n_0$ and $n_1$ be integers. We define  $f(n,k)$ as following:
\begin{eqnarray*}
f(n,k)=\max\left\{ n_0n_1+ \left\lfloor\frac{(k-1)n_0}{2}\right\rfloor   +1:n_0+n_1=n\right\}.
\end{eqnarray*}

Our first result is the following theorem.

\begin{theorem}\label{main}
Let $k\geq 2$ and $W_{2k+1}$ be a wheel on $2k+1$ vertices. Then, for large $n$, we have
$$\emph{ex}(n,W_{2k+1})=\left\{
\begin{array}{lll}
(\lceil\frac{n}{2}\rceil+1)\lfloor\frac{n}{2}\rfloor  &\mbox{ if }& k=2;\\
   f(n,k)  &\mbox{ if } & k\geq 3.
\end{array}\right.$$
\end{theorem}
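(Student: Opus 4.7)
The lower bound is obtained by verifying that the construction of Dzido and Jastrz\c{e}bski~\cite{Dzido2018} is $W_{2k+1}$-free. One takes a partition $V=A\cup B$ with $|A|=n_0$ and $|B|=n_1$ realizing the maximum in the definition of $f(n,k)$, puts the complete bipartite graph $K_{n_0,n_1}$ between the parts, and adds an appropriately chosen graph on $A$ with $\lfloor(k-1)n_0/2\rfloor+O(1)$ edges whose local structure is clique-like enough that no $C_{2k}$ forms in any vertex-neighborhood of the resulting graph.

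For the upper bound, I would begin with a stability argument. Since $\chi(W_{2k+1})=3$ (as $W_{2k+1}=K_1\vee C_{2k}$), Theorem~\ref{p+1 chromatic} gives $\mathrm{ex}(n,W_{2k+1})=(1/4+o(1))n^2$, and the usual stability companion of Theorem~\ref{erdos-stone} shows that any extremal $W_{2k+1}$-free graph $G$ admits a bipartition $V(G)=A\cup B$ with $|A|,|B|=n/2+o(n)$ and $e(G[A])+e(G[B])=o(n^2)$. Choosing the bipartition to minimize the number of edges inside parts yields $d_A(v)\le d_B(v)$ for every $v\in A$ and symmetrically for $B$, since otherwise moving $v$ across would reduce intra-part edges.

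The key structural input is that, because $G$ is $W_{2k+1}$-free, $G[N(v)]$ is $C_{2k}$-free for every $v\in V$. For a typical $v\in A$ the bipartite-like structure forces $|N(v)\cap B|\ge|B|-o(n)\ge k$; since the complete bipartite subgraph between $N_A(v)$ and $N(v)\cap B$ embeds in $G[N(v)]$, and $K_{d,m}\supseteq C_{2k}$ whenever $\min(d,m)\ge k$, we conclude $d_A(v)\le k-1$ for almost every $v\in A$. This gives $e(G[A])\le\lfloor(k-1)n_0/2\rfloor+O(1)$, and by a symmetric argument $e(G[B])=O(1)$. A further local shifting argument bounds the number $m=|A||B|-e(A,B)$ of missing cross-edges by $O(1)$: otherwise moving a boundary vertex would strictly increase $|E(G)|$. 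Combining, $|E(G)|\le n_0n_1+\lfloor(k-1)n_0/2\rfloor+c$ for a bounded constant $c$.

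The main obstacle is pinning down the exact value of $c$. This requires a case analysis classifying all $W_{2k+1}$-free extensions of the canonical $(k-1)$-regular configuration on $A$ by a bounded number of extra edges: one shows that any such extension must be assembled from $K_k$-like blocks with a rigid adjacency pattern, and that any deviation produces a $C_{2k}$ in the neighborhood of some vertex, traced as an alternating path through $B$. A direct optimization over $n_0+n_1=n$ then gives the value $f(n,k)+1$, with $n_0\in\{\lfloor(2n+k-1)/4\rfloor,\lceil(2n+k-1)/4\rceil\}$ determined by parity. For $k=2$ the $C_4$-freeness constraint is sharper, forcing a simpler extremal structure and yielding $(\lceil n/2\rceil+1)\lfloor n/2\rfloor$.
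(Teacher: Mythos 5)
Your overall architecture (construction for the lower bound, stability to get a near-bipartition, local degree bounds from the fact that $G[N(v)]$ must be $C_{2k}$-free, then a bounded cleanup) is a reasonable skeleton, and your observation that $S_{k+1}$ inside one part together with the complete bipartite graph forces $W_{2k+1}$, hence $\Delta(G[A]),\Delta(G[B])\le k-1$, matches the paper's starting point. But there is a genuine gap at the decisive step. You claim that $e(G[A])\le\lfloor(k-1)n_0/2\rfloor+O(1)$ and that ``by a symmetric argument $e(G[B])=O(1)$.'' Symmetry gives no such thing: the symmetric argument yields only $\Delta(G[B])\le k-1$, hence $e(G[B])\le\lfloor (k-1)n_1/2\rfloor$, which is linear in $n$. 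If both parts could carry $\sim(k-1)n/4$ internal edges, the total would exceed $f(n,k)$ by $\Theta(n)$, so the entire theorem hinges on showing that when one part carries $\sim(k-1)n_0/2$ edges the other part can carry at most one edge. Nothing in your proposal addresses why this is so, and it does not follow from the $S_{k+1}$/$C_{2k}$ obstruction applied to each part separately, since a graph with $\Delta\le k-1$ on each side contains neither.

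This is precisely what the paper's main lemma (Lemma~\ref{main lemma}) is for, and it is the technical heart of the proof. There one assumes both induced parts have maximum degree at most $k-1$ and runs an induction on $n_1$ with a case analysis on $k_1=\Delta(G_1)$: a vertex $x$ of degree $k_1$ inside $V_1$ together with its $k_1$ neighbours there, combined with $k_1$ vertex-disjoint copies of $P_{k-k_1}$ found (via Lemma~\ref{bound degree path}) inside the common $G_0$-neighbourhood $X$ of $\{x,x_1,\dots,x_{k_1}\}$, assembles a $C_{2k}$ in $N(x)$ because $k_1(k-k_1)+k_1\ge 2k$ in the generic range; the boundary cases $k_1\in\{1,2,k-2,k-1\}$ need separate ad hoc arguments (Cases 1--4 and Lemma~\ref{Main Lemma 1}). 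Your later paragraph about ``classifying all $W_{2k+1}$-free extensions of the canonical configuration by a bounded number of extra edges'' presupposes that the second part is already down to $O(1)$ edges, so it cannot substitute for this step. Separately, note that the paper closes the induction not by a one-shot stability cleanup but by Simonovits's progressive induction (Lemma~\ref{progrssion induction}), comparing $e(L_n)$ with $e(L_{n-2N})$ after excising a $T(2N,2)$; your stability route could in principle be made to work, but only after the missing two-sided lemma is supplied.
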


Very recently, based on Theorem~\ref{main}, Xiao and Zamora~\cite{Xiao} determine the Tur\'{a}n number of vertex-disjoint copies of odd wheels.

For a given graph $H$, let $g(n,H)$ denote the maximum number of edges not contained in any monochromatic copy of $H$ in a $2$-edge-coloring of $K_n$. If we color the edges of an extremal $n$-vertex graph for $H$ with the same color and color other edges with the other color, then we can see that $g(n,H)\geq \mbox{ex}(n,H)$ for any $H$ and $n$. In 2004, Keevash and Sudakov showed in  \cite{Keevash-Sudakov-2004} that this lower bound is tight for sufficiently large $n$ if $H$ is edge-critical or a cycle of length four. Hence, they posed the following conjecture.

\begin{conjecture}(Keevash and Sudakov \cite{Keevash-Sudakov-2004}).\label{Keevash-Sudakov-conjecture} Let $H$ be a given graph. If $n$ is sufficiently large, then
$$g(n,H)=\emph{ex}(n,H).$$
\end{conjecture}

In 2017, Ma \cite{Ma-2017} confirmed Conjecture~\ref{Keevash-Sudakov-conjecture} for an infinite family of bipartite graphs. Later, Liu, Pikhurko, and Sharifzadeh \cite{Liu-Pikhurko-Sharifzadeh-2019} extended Ma's result to a larger family of bipartite graphs and proved an upper bound for all bipartite graphs.

For non-bipartite graphs, the following theorem confirms Conjecture~\ref{Keevash-Sudakov-conjecture} for odd wheels.

\begin{theorem}\label{MAIN}
	Let $n$ be sufficiently large and $k\geq 2$. Then
	\begin{equation*}
	g(n,W_{2k+1})=\emph{ex}(n,W_{2k+1}).
	\end{equation*}
\end{theorem}

The organisation of this paper is as follows. In Section 2, we introduce the extremal graphs for odd wheels. In Section 3, we present several lemmas and useful definitions. In Section 4, we will prove Theorem~\ref{main}. In Section 5, we will first prove some lemmas and then sketch the proof Theorem~\ref{MAIN}.

\section{Extremal graphs and extremal colorings for odd wheels}\label{extrmel-graphs-for-wheel}
For a graph with $n$ vertices, let $d_1  \geq d_2 \geq \ldots \geq  d_n$ be its
  {\it degree sequence} in a non-increasing order. A {\it graphic sequence} is a list of non-negative numbers that is the degree sequence of some simple graph and we say that this  sequence is {\it graphic}.  Havel \cite{havel1955} and  Hakimi \cite{hakimi1962} (also see \cite{West2000}) proved the following.

\begin{theorem}\label{theorem for graph sequence}(Havel \cite{havel1955} and Hakimi \cite{hakimi1962}).
For $n > 1$, an integer list $\mathbf{d}$
of size $n$ is graphic if and only if $\mathbf{d}^1$ is graphic, where $\mathbf{d}^1$ is obtained from $d$
by deleting its largest element $d_1$ and subtracting $1$ from its $d_1$ next largest
elements. The only $1$-element graphic sequence is $d_1=0$.

\end{theorem}

{\it A nearly $(k-1)$-regular graph} is a graph such that each vertex has degree $k-1$ but one vertex has degree $k-2$.

\begin{proposition}\label{existence of k-1 regular graph}
Let $n\geq k$. When $n$ is odd and $k$ is even, there exist nearly $(k-1)$-regular graphs on $n$ vertices.  Otherwise, there exist $(k-1)$-regular graphs on $n$ vertices.
\end{proposition}
\begin{proof}
 Let $\mathbf{d}^0=(k-1,k-1,\ldots,k-1,k-1-i)$ be a sequence of  $n$ non-negative integers, where $i=1$ if $k$ is even and $n$ is odd, and $i=0$ otherwise.

 We say that a graph sequence $\mathbf{d}$ is {\it balanced} if the difference between any two elements of it is at most one (the only $1$-element graphic sequence is balanced).

 We define $\mathbf{d}^{i+1}$ from $\mathbf{d}^i$ by deleting its largest element $d^i_1$ of $\mathbf{d}^i$ and subtracting $1$ from its $d^i_1$ next largest elements. Let $s_i$ be the sum of the elements of $\mathbf{d}^{i}$. Thus it is easy to see that $\mathbf{d}^i$ is balanced and $s_i$ is an even number for each $i\in\{0,1,\ldots,n-1\}$. By Theorem~\ref{theorem for graph sequence}, it is sufficient to show that $\mathbf{d}^{n-1}$ is a graphic sequence, that is,  the only element of $\mathbf{d}^{n-1}$ is $0$.

\medskip

\noindent{\bf Claim.} $s_i\leq 2{n-i \choose 2}$ for each $i\in \{0,\ldots,n-2\}$.

\medskip

\begin{proof}
We prove the claim by induction on $i$. Obviously the claim holds for $i=0$. Suppose that the claim holds for $i-1$. Then we have $s_{i-1}\leq2{n-i+1 \choose 2}$. Let $d_{1}^{i-1}$ be a largest element of $\mathbf{d}^{i-1}$. Note that $\mathbf{d}^j$ is balanced for each $j\in\{0,1,\ldots,n-1\}$. We have
\begin{eqnarray*}
s_i&=&s_{i-1}-d_1^{i-1}\leq s_{i-1}- \left\lceil\frac{s_{i-1}}{n-i}\right\rceil\\
&=& \left\lfloor\left(1-\frac{1}{n-i}\right)s_{i-1}\right\rfloor\leq \left\lfloor\left(1-\frac{1}{n-i}\right) 2{n-i +1\choose 2}\right\rfloor \\
 &= &2{n-i \choose 2}.
\end{eqnarray*}
We finish the proof of the claim.\end{proof}

Applying the claim for $i=n-2$, the sum of the elements of $\mathbf{d}^{n-2}$ is at most $2$. Since $s_{n-2}$ is an even number, we have $s_{n-2}=0$ or $2$. Thus the only element of $\mathbf{d}^{n-1}$ is $0$. The proof is complete.\end{proof}

Let $S_{k}$ be the star on $k$ vertices and $P_{k}$ be the path on $k$ vertices.

\begin{proposition}\label{k-1 regular no C2k}
Let $n\geq 2k$. Then
$$\emph{ex}(n,\{S_{k+1},P_{2k-1}\})=\left\lfloor\frac{(k-1)n}{2}\right\rfloor.$$
\end{proposition}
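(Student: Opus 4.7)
The plan splits into an immediate upper bound and a constructive lower bound. For the upper bound, a graph $G$ with no $S_{k+1}=K_{1,k}$ satisfies $\Delta(G)\le k-1$, so by the handshake lemma $|E(G)| \le \lfloor (k-1)n/2\rfloor$.

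For the lower bound, the plan is to exhibit a $\{S_{k+1},P_{2k-1}\}$-free graph $G$ on $n$ vertices with exactly $\lfloor (k-1)n/2\rfloor$ edges. Take $G$ to be a vertex-disjoint union of components, each of order at most $2k-2$, and each being $(k-1)$-regular or (in at most one component) nearly $(k-1)$-regular. Every component is then too small to host $P_{2k-1}$, while $\Delta(G)=k-1$ forbids $S_{k+1}$. The orders $m\in\{k,\ldots,2k-2\}$ for which such a component exists are precisely those listed just before the proposition.

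The remaining task is to partition $n$ into admissible component sizes so that the total number of edges reaches $\lfloor(k-1)n/2\rfloor$. A short case analysis on the parities of $k$ and $n$ does this: if $k$ is odd, every order in $[k,2k-2]$ is usable and the sum of edge counts equals $(k-1)n/2$; if $k$ and $n$ are both even, only even orders in $[k,2k-2]$ are used, and the sum again equals $(k-1)n/2$; if $k$ is even and $n$ is odd, exactly one component is taken nearly $(k-1)$-regular on an odd order in $[k+1,2k-3]$, reducing the edge count by $\frac{1}{2}$ to $((k-1)n-1)/2 = \lfloor(k-1)n/2\rfloor$.

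The main obstacle is verifying that such a partition of $n$ into admissible parts exists for every $n\ge 2k$. The approach is to first pick the number of parts $t$ with $tk \le n \le t(2k-2)$, which exists whenever $k\ge 3$ and $n\ge 2k$, and then adjust individual parts by $\pm 1$ so that the parity requirements are met; since the admissible interval $[k,2k-2]$ has length $k-2\ge 1$ for $k\ge 3$, there is always room to shift a unit between two parts to create an odd part in $[k+1,2k-3]$ if needed. The degenerate case $k=2$ is immediate: both forbidden graphs coincide with $P_3$, so $G$ must be a matching, and the bound $\lfloor n/2\rfloor$ is attained by $M_n$.
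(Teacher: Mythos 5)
Your proof is correct and follows essentially the same route as the paper: the upper bound from $\Delta(G)\le k-1$, and a lower-bound construction by a disjoint union of $(k-1)$-regular (or one nearly $(k-1)$-regular) components of order at most $2k-2$. You are in fact more explicit than the paper about the parity bookkeeping and the partition of $n$ into admissible component sizes, which the paper dismisses as ``easy to check.''
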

\begin{proof} It follows from Proposition~\ref{existence of k-1 regular graph} that there exist $(k-1)$-regular or nearly $(k-1)$-regular graphs on $m\geq k$ vertices. Note that a graph containing $P_{2k-1}$ as a subgraph has at least $2k-1$ vertices, there are $\{S_{k+1},P_{2k-1}\}$-free graphs on $k\leq m\leq 2k-2$ vertices with $\left\lfloor\frac{(k-1)m}{2}\right\rfloor$ edges. If $n$ is divisible by $k$, then the disjoint union of $\tfrac{n}{k}$ cliques of size $k$ is $\{S_{k+1},P_{2k-1}\}$-free and has $\tfrac{(k-1)n}{2}$ edges. Next, we suppose that $n=sk+r$, where $1 \leq r \leq k-1$. The assumption $n \geq 2k$ gives $s \geq 2$. If $k$ is odd, then we write $n=(s-2)k+2k+r=(s-2)k+t_1+t_2$, where $k \leq t_1, t_2 \leq 2k-2$. We apply Proposition \ref{existence of k-1 regular graph} to construct an $\{S_{k+1},P_{2k-1}\}$-free graph which consists of $s-2$ cliques of size $k$ and two $(k-1)$-regular graph with $t_1$ vertices and $t_2$ vertices respectively. We can easily check this graph has
$\tfrac{(k-1)n}{2}$ edges. If $k$ is even, then we have two cases depending on the parity of $n$. For the case of $n$ being even, we can prove it by repeating the argument for the case where $k$ is odd by noting that both $t_1$ and $t_2$ are even.   For the case of  $n$ being odd, we note $2k+r$ is odd and $2k+1 \leq 2k+r \leq 3k-1$. We can write $2k+r=t_1+t_2$,
 here $k \leq t_1,t_2 \leq 2k-2$ and one of them is odd, say $t_2$. Similarly, we construct a graph consisting of $(s-2)$ disjoint copies of $K_k$, a $(k-1)$-regular graph with $t_1$ vertices and a nearly $(k-1)$-regular graph with $t_2$ vertices. It is easy to verify that this graph has
 $ \left\lfloor\tfrac{(k-1)n}{2}\right\rfloor$ edges. The proof is complete.\end{proof}

We are ready to introduce the extremal graphs for $W_{2k+1}$. Denote by $\mathcal{U}_n^k$ the class of $(k-1)$-regular graphs or nearly $(k-1)$-regular graphs on $n$ vertices. Denote by $\mathcal{U}^k_n(P_{2k-1})$ the subset of $\mathcal{U}_n^k$ such that any graph in it is $P_{2k-1}$-free. It follows from Proposition~\ref{k-1 regular no C2k} that $\mathcal{U}^k_n(P_{2k-1})$ is not empty for $n\geq 2k$. Let $n_0\geq n_1\geq 2$ and $n_0\geq2 k $. Denote by $K_{n_0,n_1}(\mathcal{U}^k_{n_0}(P_{2k-1}),K_2)$  the class of graphs obtained by taking a complete bipartite graph $K_{n_0,n_1}$ and embedding a graph from $\mathcal{U}^k_{n_0}(P_{2k-1})$ into the larger partite set   and embedding an edge into the smaller partite set. Let $\mathcal{K}_n^k$ be the subset of $K_{n_0,n_1}(\mathcal{U}^k_{n_0}(P_{2k-1}),K_2)$ which consists of graphs with $f(n,k)$ edges. It is easy to check that graphs in $\mathcal{K}_n^k$  are $W_{2k+1}$-free. We will show that the graphs in $\mathcal{K}_n^k$ are extremal graphs for $W_{2k+1}$, provided $n$ is sufficiently large.

For Theorem~\ref{MAIN}, we will show that one of the extremal colorings is obtained by taking a blue copy of $G_n\in\mathcal{K}_n^k$ and coloring the complement of $G_n$ red. Other extremal colorings can be obtained by changing some blue edges between the classes of $G_n$ to red edges such that those new red edges are not contained in a red copy of $W_{2k+1}$.

\medskip


\section{Lemmas for Theorems~\ref{main} and~\ref{MAIN}}

\subsection{Lemma of progressive induction.}

Simonovits \cite{Simonovits1968} introduced the {\it progressive induction} which is  a powerful tool to solve extremal problems in graph theory. For examples, see \cite{han2019,keevash2004}.
\begin{lemma}(Simonovits \cite{Simonovits1968}).\label{progrssion induction}
Let $\mathfrak{U}=\cup_{i=1}^{\infty}\mathfrak{U}_i$ be a set of given elements, such that $\mathfrak{U}_i$ are disjoint subsets of $\mathfrak{U}$. Let $B$ be a condition or property defined on $\mathfrak{U}$ (i.e. the elements of $\mathfrak{U}$ may satisfy or not satisfy $B$). Let $\phi(a)$ be a function defined  on $\mathfrak{U}$ such that $\phi(a)$ is a non-negative integer and\\
(a) if $a$ satisfies $B$, then $\phi(a)=0$.\\
(b) there is an $M_0$ such that if $n>M_0$ and $a\in \mathfrak{U}_n$ then either $a$ satisfies $B$ or there exist an $n^{\prime}$ and an $a^{\prime}$ such that
\[\frac{n}{2}<n^{\prime}<n, a^{\prime}\in \mathfrak{U}_{n^{\prime}} \mbox{ and } \phi(a)<\phi(a^{\prime}).\]
Then there exists an $n_0$ such that if $n>n_0$, every $a\in \mathfrak{U}_n$  satisfies $B$.
\end{lemma}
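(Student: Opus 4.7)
The plan is to argue by contradiction, iteratively applying (b) to generate a sequence along which $\phi$ strictly increases while the underlying index roughly halves, and then to race a lower bound on the sequence length coming from the index against an upper bound coming from $\phi$.

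Suppose no such $n_0$ exists. Then for arbitrarily large $n$ there is some $a \in \mathfrak{U}_n$ violating $B$; fix one such pair $(n,a)$. Define a sequence by setting $b_0 := a$ and $m_0 := n$, and whenever $b_k \in \mathfrak{U}_{m_k}$ has $m_k > M_0$ and does not satisfy $B$, invoke (b) to pick $b_{k+1} \in \mathfrak{U}_{m_{k+1}}$ with $m_k/2 < m_{k+1} < m_k$ and $\phi(b_k) < \phi(b_{k+1})$. Since the $m_k$ are strictly decreasing positive integers, the process terminates at some index $T$, either because $b_T$ satisfies $B$ or because $m_T \leq M_0$. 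The first possibility is ruled out by hypothesis (a) whenever $T \geq 1$: it would give $\phi(b_T) = 0$, but the strict integer monotonicity $\phi(b_0) < \phi(b_1) < \cdots < \phi(b_T)$ forces $\phi(b_T) \geq T \geq 1$; and $T = 0$ says that $a$ itself satisfies $B$, contradicting the choice of $a$. Hence the termination must occur at some $T \geq 1$ with $m_T \leq M_0$.

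Next I would extract the quantitative contradiction. From $m_{k+1} > m_k/2$ one gets $m_T > n/2^T$, which combined with $m_T \leq M_0$ yields $T > \log_2(n/M_0)$. From the strict integer increase of $\phi$, $\phi(b_T) \geq T$. In the intended application each $\mathfrak{U}_{n'}$ is a finite collection of combinatorial objects, so $C := \max\{\phi(b) : b \in \mathfrak{U}_{n'},\ n' \leq M_0\}$ is a finite constant depending only on $M_0$. Combining these estimates gives $\log_2(n/M_0) < \phi(b_T) \leq C$, whence $n < 2^C M_0$, and setting $n_0 := 2^C M_0$ produces the desired contradiction for every $n > n_0$.

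The subtle point, and the only place where one must step slightly outside the purely formal hypotheses, is the final estimate: the argument closes only because $\phi$ admits a uniform bound on the collection $\bigcup_{n' \leq M_0} \mathfrak{U}_{n'}$. This finiteness is automatic in the graph-theoretic settings where the lemma is deployed (each $\mathfrak{U}_{n'}$ is a finite set of graphs on $n'$ vertices), but since it is not explicitly spelled out in the abstract hypotheses, a careful write-up must invoke it in order to pin down the threshold $n_0 = 2^C M_0$.
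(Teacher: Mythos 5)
The paper offers no proof of this lemma --- it is quoted from Simonovits \cite{Simonovits1968} and used as a black box --- so there is no in-paper argument to compare against; I can only assess your proof on its own terms. It is correct and is essentially the standard (and, as far as I recall, Simonovits's original) argument: iterate (b) to get $b_0,b_1,\dots$ with $m_{k+1}\in(m_k/2,m_k)$ and $\phi$ strictly increasing, rule out termination via $B$ using (a) together with $\phi(b_T)\ge T$, deduce $T>\log_2(n/M_0)$ from the halving bound, and contradict a uniform bound on $\phi$ over the terminal classes. The caveat you flag at the end is not pedantry but a genuine necessity: with $\phi$ an arbitrary non-negative-integer function on $\mathfrak{U}$ and the $\mathfrak{U}_n$ allowed to be infinite, the lemma as literally stated is false (take every $\mathfrak{U}_n=\{a_{n,1},a_{n,2},\dots\}$ with $\phi(a_{n,i})=i$ and no element satisfying $B$; condition (b) holds by passing from $a_{n,i}$ to $a_{n-1,i+1}$, yet the conclusion fails). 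The repair is supplied by the remark the paper places immediately after the lemma, namely that $\phi(a)=\phi(n)$ depends only on $n$; then $\phi$ takes at most $M_0$ values on $\bigcup_{n'\le M_0}\mathfrak{U}_{n'}$ (indeed your $b_T$ lands in $\bigcup_{M_0/2<n'\le M_0}\mathfrak{U}_{n'}$), so your constant $C$ exists and $n_0=2^{C}M_0$ works. With that reading --- or with each $\mathfrak{U}_{n'}$ finite, as in the intended application to extremal graphs --- your write-up is complete.
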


\remark In our proof of Theorem~\ref{main}, $\mathfrak{U}_n$ is the set of extremal graphs for $W_{2k+1}$ on $n$ vertices, and $B$ is the property defined on $\mathfrak{U}$ such that if $G\in \mathfrak{U}_i$, then we have $G\in\mathcal{K}_i^k$.

\subsection{Decomposition families of graphs}\label{decomposition family}

Given two graphs $G$ and $H$, denote by $G+ H$ the graph obtained from $G\cup H$, the vertex-disjoint union of graphs $G$ and $H$, by adding edges between each vertex of $G$ and each vertex of $H$. Denote by $K_t$ the complete graph on $t$ vertices and $\overline{K}_t$ the complement of $K_t$.  For every  graph $L$,  Simonovits \cite{Simonovits1974} defined the decomposition family of $L$.

\medskip
\begin{definition}(Simonovits \cite{Simonovits1974}).
\emph{Given a graph $L$, let $\mathcal{F}:=\mathcal{F}(L)$ be the family of minimal graphs $F$ that satisfy the following: there exists a constant $t=t(L)$ depending on $L$ such that $L\subset (F\cup \overline{K}_t)+ T(t,p-1)$. We call $\mathcal{F}$ the {\it decomposition family} of $L$.}
\end{definition}

The decomposition family of a non-bipartite graph often helps us to determine the error term of its Tur\'{a}n number (see Theorem~\ref{p+1 chromatic}). More precisely, the decomposition family of a non-bipartite graph helps us to determine the fine structure of its extremal graphs. A deep theorem of Simonovits \cite{Simonovits1974} shows that if the decomposition family $\mathcal{F}(L)$ of $L$ contains a linear forest, then the extremal graphs for $L$ have very simple and symmetric structure. In our case, the decomposition family of $W_{2k+1}$ is $\{S_{k+1},C_{2k}\}$. Thus we can not characterize the extremal graphs for $W_{2k+1}$  by directly applying Simonovits' theorem.

\subsection{Other lemmas}

For a graph $G$, let $V(G)$ be the vertex set of $G$ and $e(G)$ be the number of edges $G$. Let $G[X]$ be the subgraph of $G$ induced by $X\subseteq V(G)$.    Denote by $k\cdot G$ the vertex-disjoint union of $k$ copies of $G$. Erd\H{o}s and Gallai \cite{erdHos1959maximal} proved the following famous result.

\begin{theorem}(Erd\H{o}s and Gallai \cite{erdHos1959maximal}).\label{Pathk1}
 Let $G$ be a graph on $n$ vertices. If $G$ does not contain a path on $k$ vertices and $n\ge k\ge 2$, then $e(G)\leq (k-2)n/2$.
\end{theorem}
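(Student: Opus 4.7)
The plan is a strong induction on $n$. I first observe that the inequality holds trivially when $n \le k-1$: in that case $e(G) \le \binom{n}{2} = n(n-1)/2 \le (k-2)n/2$ since $n-1 \le k-2$. Although the theorem is stated for $n \ge k$, this observation is convenient because the induction will delete vertices one at a time and may produce sub-problems on fewer than $k$ vertices, which still need to be bounded.

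For the inductive step, fix $n \ge k$ and assume the bound for every graph on fewer than $n$ vertices. If $G$ is disconnected, write $G = G_1 \cup \cdots \cup G_r$ where $G_i$ is a component of order $n_i < n$. Each $G_i$ is $P_k$-free, so by the inductive hypothesis (using the trivial bound whenever $n_i < k$), $e(G_i) \le (k-2)n_i/2$; summing over $i$ gives $e(G) \le (k-2)n/2$.

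Now suppose $G$ is connected. Choose a longest path $P = v_1 v_2 \cdots v_\ell$; since $G$ is $P_k$-free, $\ell \le k-1 < n$, so some vertex lies outside $V(P)$. The crucial step, where I expect the main difficulty to lie, is a rotation/swap argument showing $d(v_1) + d(v_\ell) \le \ell - 1 \le k - 2$. Both endpoints have all neighbors on $P$ (else $P$ could be extended). If some index $i$ satisfied $v_1 v_i \in E$ and $v_\ell v_{i-1} \in E$ simultaneously, then $v_1 v_2 \cdots v_{i-1} v_\ell v_{\ell-1} \cdots v_i v_1$ would be a Hamilton cycle on $V(P)$; combining this with connectivity and a vertex outside $V(P)$ would produce a path longer than $P$, contradicting maximality. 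Hence $\{i : v_1 v_i \in E\}$ and $\{i+1 : v_\ell v_i \in E\}$ are disjoint subsets of $\{2, \ldots, \ell\}$, giving the degree-sum bound. In particular one of $v_1, v_\ell$, say $v_1$, has degree at most $(k-2)/2$; removing $v_1$ and applying the inductive hypothesis to $G - v_1$ on $n-1$ vertices yields $e(G) \le (k-2)(n-1)/2 + (k-2)/2 = (k-2)n/2$, closing the induction.
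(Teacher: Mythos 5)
The paper does not prove this statement at all: it is quoted verbatim as the classical Erd\H{o}s--Gallai theorem and imported by citation, so there is no in-paper argument to compare against. Your self-contained proof is correct and is the standard one: strong induction on $n$ (extended to all $n$ via the trivial bound $\binom{n}{2}\le (k-2)n/2$ for $n\le k-1$), reduction to the connected case, and the rotation argument showing that the two endpoints of a longest path $v_1\cdots v_\ell$ have $d(v_1)+d(v_\ell)\le \ell-1\le k-2$, since the index sets $\{i: v_1v_i\in E\}$ and $\{i: v_\ell v_{i-1}\in E\}$ must be disjoint subsets of $\{2,\dots,\ell\}$ (otherwise a spanning cycle of $V(P)$ plus connectivity and a vertex outside $P$ would extend the path). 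Deleting the endpoint of smaller degree then closes the induction. The only detail worth making explicit is that in the connected case one has $n\ge 3$ and hence $\ell\ge 3$ (a connected $P_2$-free graph on at least two vertices does not exist, and a connected graph on at least three vertices contains $P_3$), so the cycle $v_1\cdots v_{i-1}v_\ell\cdots v_iv_1$ you construct is a genuine simple cycle rather than the degenerate case $\ell=2$; this is a one-line remark, not a gap.
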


We will need the following lemmas.

\begin{lemma}\label{bound degree path}
Let $k>k^\prime\geq 0$ and $\epsilon>0$. Assume that $G$ is a graph on $n$ vertices with $\Delta(G)\leq k-1$ and $e(G)=(k^\prime n+\epsilon n)/2$. There exists a constant $n_0=n_0(k,k^\prime,s,\epsilon)$  such that if $n\geq n_0$, then $G$ contains $s\cdot P_{k^\prime+2}$ and $s\cdot S_{k^\prime+2}$ as subgraphs.
\end{lemma}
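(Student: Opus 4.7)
The plan is a straightforward greedy extraction: peel off one copy of $P_{k'+2}$ (respectively $S_{k'+2}$) at a time, and check that the edge surplus $\epsilon n/2$ survives $s-1$ rounds of removal. The degree bound $\Delta(G)\le k-1$ is crucial because it guarantees that each removal costs only $O(1)$ edges, while our advantage over the Erdős--Gallai threshold is linear in $n$. Note that the hypothesis actually forces $k\ge k'+2$, since $\Delta\le k-1\le k'$ would give $e(G)\le k'n/2$, contradicting $\epsilon>0$; in particular $k-1-k'\ge 1$ throughout.

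For the \emph{paths}, start from $e(G)=(k'n+\epsilon n)/2>k'n/2$, apply Theorem~\ref{Pathk1} to obtain a copy $P^{(1)}$ of $P_{k'+2}$, and delete its $k'+2$ vertices. Since $\Delta(G)\le k-1$, this destroys at most $(k-1)(k'+2)$ edges. After $t\le s-1$ rounds, the remaining graph $G_t$ satisfies $n_t\ge n-t(k'+2)$ and $e(G_t)\ge e(G)-t(k-1)(k'+2)$, so
\[
e(G_t)-\frac{k'}{2}n_t\;\ge\;\frac{\epsilon n}{2}-t(k'+2)\!\left(k-1-\tfrac{k'}{2}\right).
\]
Choosing $n_0:=\lceil 2s(k'+2)(k-1-k'/2)/\epsilon\rceil+2s(k'+2)$ ensures that this quantity is positive and that $n_t\ge k'+2$ for every $t\le s-1$; another invocation of Erdős--Gallai then supplies $P^{(t+1)}$. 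After $s-1$ iterations we have produced $s$ pairwise vertex-disjoint copies of $P_{k'+2}$.

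For the \emph{stars}, the same removal scheme applies verbatim: the only change is to replace the appeal to Erdős--Gallai by the elementary remark that any graph $H$ with average degree strictly larger than $k'$ contains a vertex of degree at least $k'+1$, whose neighbourhood supplies an $S_{k'+2}$. Since the averaging criterion $2e(G_t)/n_t>k'$ is exactly the inequality controlled above, the identical threshold on $n_0$ (with perhaps $s(k'+2)$ extra to guarantee $n_t\ge k'+2$) also delivers $s$ disjoint stars.

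I do not anticipate any genuine obstacle: the argument is purely bookkeeping, and the whole lemma rests on the observation that the linear slack $\epsilon n/2$ dwarfs the constant-per-round losses $(k-1)(k'+2)$. The only point worth being careful about is that the two applications (paths and stars) share the same $n_0$, which can be taken as the maximum of the two thresholds computed above, yielding the desired $n_0=f(k,k',s,\epsilon)$.
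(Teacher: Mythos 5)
Your proof is correct and follows essentially the same route as the paper's: iteratively extract a copy of $P_{k'+2}$ via the Erd\H{o}s--Gallai theorem, using $\Delta(G)\le k-1$ to bound the $O(1)$ edge loss per extraction against the linear surplus $\epsilon n/2$. The only differences are cosmetic --- you run a greedy loop where the paper phrases it as induction on $s$, and you write out the star case (max degree exceeds the average degree $>k'$), which the paper leaves as analogous.
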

\begin{proof}
We only prove that $G$ contains $s\cdot P_{k^\prime+2}$ as a subgraph. Let $n_0\geq2(k^\prime+2)ks/\epsilon$. By Theorem~\ref{Pathk1}, $G$ contains $P_{k^\prime+2}$ as a subgraph. Let $t=\max\{\ell:\ell\cdot P_{k^\prime+2}\subseteq G\}$. If $t\geq s$, then we are done. Suppose that $t\leq s-1$. Let $G^\prime=G-V(t\cdot P_{k^\prime+2})$. Since $n_0\geq2(k^\prime+2)ks/\epsilon$, we have
 \begin{align*}
e(G^\prime)&\geq \frac{k^\prime n+\epsilon n}{2}-(k-1)(t-1)(k^\prime+2)\\
&> \frac{1}{2} k^\prime(n-(t-1)(k^\prime+2)).
\end{align*}
It follows from Theorem~\ref{Pathk1} that $G^\prime$ contains $P_{k^\prime+2}$ as a subgraph, a contradiction to the maximality of $t$. Thus $G$ contains $s\cdot P_{k^\prime+2}$ as a subgraph. The proof is complete.\end{proof}


\begin{lemma}\label{Main Lemma 1}
Let $G$ be a graph with a vertex partition $V(G)=V_0\cup V_1$ such that $|V_0|=n_0$, $|V_1|=n_1$, and $n_0\geq n_1\geq n_0/2$. Let $G_0=G[V_0]$ with $e(G_0)\leq \lfloor(k-1-k_1) n_0/2 \rfloor+N$ and $G_1=G[V_1]$ with $\Delta(G_1)=k_1\leq k-1$, where $N$ is a constant depending on $k$. Suppose $n_0$ is sufficiently large and
\begin{equation}\label{1}
e(G)\geq n_0 n_1 + \left\lfloor\frac{(k-1)n_0}{2}\right\rfloor +1.
\end{equation}
In each of the following cases:\\
(\romannumeral1) $k_1=k-1$,\\
(\romannumeral2) $k_1=2$ and $k\geq 4,$\\
(\romannumeral3) $k_1=3$ and $k=5$,\\
we have either $G$ contains a copy of $W_{2k+1}$ or $G\in  K_{n_0,n_1}(\mathcal{U}^k_{n_0}(P_{2k-1}),K_2)$.
\end{lemma}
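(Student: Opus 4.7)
The plan is to show that in each of the three cases the graph $G$ must contain a copy of $W_{2k+1}$; the alternative conclusion $G\in K_{n_0,n_1}(\mathcal{U}^k_{n_0}(P_{2k-1}),K_2)$ is essentially vacuous once $n_0$ is large, because graphs in that class have $e(G_0)=\lfloor(k-1)n_0/2\rfloor$, which contradicts the hypothesis $e(G_0)\leq\lfloor(k-1-k_1)n_0/2\rfloor+N$ whenever $k_1\geq 1$ and $n_0$ is large. Hence it suffices to exhibit a vertex $w$ of $G$ together with a cycle $C_{2k}\subseteq G[N(w)]$, since then $\{w\}+C_{2k}=W_{2k+1}\subseteq G$.

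The first step is edge accounting. Setting $m:=n_0n_1-e(V_0,V_1)$ and combining (\ref{1}) with the hypotheses on $e(G_0)$ and $e(G_1)\leq k_1n_1/2$ yields, up to additive constants, $m\leq k_1(n_1-n_0)/2 + N$ and $e(G_1)\geq \lfloor k_1n_0/2\rfloor +1-N$. When $n_0\geq n_1$ the first bound pins $m=O(1)$, so all but boundedly many vertices of $V_0$ have full degree to $V_1$; any such vertex serves as a pivot $w\in V_0$ with $V_1\subseteq N(w)$. The hypothesis $\Delta(G_1)=k_1\geq 2$ then supplies a vertex $v\in V_1$ with two neighbors $u_1,u_2\in V_1$ (three in case (iii)), and since $m=O(1)$ their common $V_0$-neighborhood $A:=N(u_1)\cap N(u_2)\cap N(v)\cap V_0$ has size $n_0-O(1)$. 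I now build $C_{2k}\subseteq N(w)$ case by case. In case (ii) I close the path $u_1vu_2\subseteq G_1$ into a $2k$-cycle by greedily alternating between $A\setminus\{w\}$ and $V_1\setminus\{u_1,v,u_2\}$, using $k-1$ vertices of $A$ and $k-2$ further vertices of $V_1$; at each greedy step only $O(1)$ choices are forbidden, so the construction succeeds. In case (iii), with $k=5$, the third neighbor of $v$ accommodates the parity and yields an analogous $C_{10}$. In case (i), the lower bound on $e(G_1)$ combined with $\Delta(G_1)\leq k-1$ forces $G_1$ to be nearly $(k-1)$-regular, so Lemma~\ref{bound degree path} extracts a path $P_k\subseteq G_1$ that is closed to a $C_{2k}$ through $k$ alternating vertices drawn from $A$ and $V_1$. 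In every subcase $w$ is joined to all $2k$ cycle vertices, so $W_{2k+1}\subseteq G$.

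The main obstacle is the regime $n_1>n_0$, where $m$ may be unbounded and no pivot in $V_0$ with full $V_1$-degree need exist. Here I reverse the roles of the two parts: (\ref{1}) together with the hypothesis on $e(G_0)$ forces either a vertex of very large bipartite degree from $V_1$ to $V_0$, which serves as the new pivot, or enough edges inside $G_1$ for Lemma~\ref{bound degree path} and Theorem~\ref{Pathk1} to yield a long path in $G_1$ that, combined with a bipartite pivot, completes $C_{2k}$. Case (i) with $k\geq 3$ is the most delicate, since a hypothetical $(k-1)$-regular $P_{2k-1}$-free $G_1$ would block the path-based argument; this possibility is excluded by exploiting the strict ``$+1$'' slack in (\ref{1}) together with $e(G_0)\leq N$, which leaves no room for the extremal configuration once the larger part would have to be $V_1$, yielding the required $W_{2k+1}$.
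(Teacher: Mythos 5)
Your proposal has two fatal problems. First, the reduction ``the second alternative is vacuous, so it suffices to find $W_{2k+1}$'' is false. In case (\romannumeral1) with $n_0=n_1$, let $G$ consist of the complete bipartite graph between $V_0$ and $V_1$, a single edge inside $V_0$, and a $(k-1)$-regular $P_{2k-1}$-free graph inside $V_1$. Then $e(G_0)=1\leq N$, $\Delta(G_1)=k-1=k_1$, and $e(G)=n_0n_1+\lfloor (k-1)n_0/2\rfloor+1$, so every hypothesis of the lemma holds; yet $G$ is $W_{2k+1}$-free and belongs to $K_{n_0,n_1}(\mathcal{U}^k_{n_0}(P_{2k-1}),K_2)$ (with the regular graph sitting in $V_1$, which is legitimate since the parts have equal size). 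This is exactly the configuration the paper isolates in case (\romannumeral1) via the subcase $e(G_0)\leq 1$, $n_0=n_1$; any argument that always produces a $W_{2k+1}$ under the stated hypotheses is therefore refuted by this example.

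Second, your construction of the wheel does not close. You take the hub $w\in V_0$ with $V_1\subseteq N(w)$, but the $C_{2k}$ you build uses $k-1$ (or $k$) vertices of $A\subseteq V_0$, and these must also lie in $N(w)$. Adjacency of $w$ to vertices of $V_0$ is governed by $G_0$, whose edge count is only $\lfloor (k-1-k_1)n_0/2\rfloor+N$ --- in case (\romannumeral1) it is at most $N$, so all but $O(1)$ vertices of $V_0$ are isolated in $G_0$ and a typical pivot $w$ has \emph{no} cycle vertices from $V_0$ in its neighbourhood. The paper avoids this by putting the hub in $V_1$: it extracts many disjoint stars $S_{k_1+1}$ from $G_1$ (via Lemma~\ref{bound degree path}), notes that if one such star were completely joined to $V_0$ then a small structure inside $G_0$ (an $S_{k-1}$, or $S_3\cup K_2\cup K_2$, or two independent edges) would complete a $C_{2k}$ in the neighbourhood of the star's centre, and otherwise charges one missing bipartite edge to each of the $\Omega(N)$ stars to contradict (\ref{1}). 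Your $n_1>n_0$ paragraph is too vague to assess, but it inherits both defects above.
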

\begin{proof}  Since  $n_0\geq n_1$ and  $e(G_0)\leq \lfloor(k-1-k_1) n_0/2\rfloor +N$, by (\ref{1}) we have $e(G_1)> \lfloor(k_1-1+\epsilon)n_1/2\rfloor$, where $0<\epsilon<1$. As $n_1\geq n_0/2$ is sufficiently large, by Lemma~\ref{bound degree path}, $G_1$ contains $2(N+1)\cdot S_{k_1+1}$ as a subgraph.

\medskip

\noindent($\romannumeral1$) $k_1=k-1$. Then $G_1$ contains $2(N+1)\cdot S_{k}$ as a subgraph. First, we show that there is a non-edge between $G_0$ and each $S_{k}$ in $G_1$. In fact, if there is an $S_{k}$ in $G_1$ such that each vertex of it is adjacent to each vertex of $G_0$, then we have $e(G_0)\leq 1 $. Otherwise, $G$ contains $W_{2k+1}$ as a subgraph and we are done. Hence, if either $n_0=n_1$, or $n_0=n_1+1$ is odd and $k=2$, then, combining $e(G_0)\leq 1$, $\Delta(G_1)\leq k-1$ and (\ref{1}), we have $G\in K_{n_0,n_1}(\mathcal{U}^k_{n_0}(P_{2k-1}),K_2)$ and we are done. For the rest cases, we have $e(G)\leq \lfloor(k-1)n_1/2\rfloor+n_0n_1+1<\lfloor(k-1)n_0/2\rfloor+n_0n_1+1,$ a contradiction. Thus there is a non-edge between $G_0$ and each $S_{k}$ in $G_1$. Hence, we have
\begin{align*}
e(G)&\leq N+n_0n_1+\left\lfloor\frac{(k-1)n_1}{2}\right\rfloor-2(N+1)\\
&<\left\lfloor\frac{(k-1)n_0}{2}\right\rfloor+n_0n_1+1,
\end{align*}
a contradiction to (\ref{1}). We finish the proof of the lemma for $k_1=k-1$.

\medskip

\noindent($\romannumeral2$) $k_1=2$ and $k\geq 4$. Then $G_1$ contains $2(N+1)\cdot S_{3}$ as a subgraph. If each vertex of $G_0$ is adjacent to each vertex of  $2\cdot S_3\cup \overline{K}_{k-4}$ in $G_1$, then we will find a $W_{2k+1}$ as a subgraph in $G$. Since
$$e(G_0)\geq n_0n_1 + \left\lfloor\frac{(k-1)n_0}{2}\right\rfloor+1-  n_0n_1- n_1\geq \left\lfloor\frac{(k-3)n_0}{2}\right\rfloor+1,$$
$G_0$ contains a copy of $S_{k-1}$. Hence the subgraph of $G$ induced by $V(2\cdot S_3 \cup K_{k-4})\cup V(S_{k-1})$ contains a copy of $W_{2k+1}$ and we are done. Thus there is a non-edge between $G_0$ and each  $2\cdot S_3\cup \overline{K}_{k-4}$ in $G_1$. Hence, we have $e(G)\leq n_0n_1+  \lfloor(k-3)n_0/2\rfloor+N + n_1  -(N+1)< \lfloor(k-1)n_0/2\rfloor+n_0n_1+1 $, a contradiction. The proof of this case is complete.

\medskip

\noindent($\romannumeral3$) $k_1=3$ and $k=5$. Then $G_1$ contains $2(N+1)\cdot S_{4}$ as a subgraph. Suppose that each vertex of $G_0$ is adjacent to each vertex of $S_{4}$ in $G_1$. Since
$$e(G_0)\geq n_0n_1 + 2n_0+1-  n_0n_1- \left\lfloor \frac{3n_1}{2} \right\rfloor \geq \left\lceil\frac{n_0}{2}\right\rceil +1$$
and $\Delta(G_0)\leq4$, $G_0$ contains a copy of $S_3\cup K_2\cup K_2$.  Therefore, the induced subgraph of $G$ on vertex set $V(S_4)\cup V(S_3\cup K_2\cup K_2)$ contains $W_{11}$ as a subgraph and we are done. Thus there is a non-edge between $G_0$ and each $S_{4}$ in $G_1$. Now, we have $e(G)\leq n_0n_1+ \lfloor n_0/2 \rfloor+N+ \lfloor 3n_1/2 \rfloor -2(N+1)<n_0 n_1 + 2n_0+1$, a contradiction. The lemma is proved.\end{proof}

The set of neighbors of $v$ is denoted by $N_G(v)$. Let $d_{G}(v)=|N_G(v)|$ be the {\it degree} of $v$. For a subgraph $G^\prime \subseteq G$, let  $N_{G^\prime}(v)=N_G(v)\cap V(G^\prime)$ and  $d_{G^\prime}(v)=|N_{G^\prime}(v)|$.

\begin{lemma}\label{main lemma}
Suppose that $G$ is a $W_{2k+1}$-free graph with a vertex partition $V(G)=V_0\cup V_1$ such that $|V_0|=n_0$, $|V_1|=n_1$ and $n_0\geq n_1\geq k+1$. Assume $G_0=G[V_0]$ with $\Delta(G_0)\leq k-1$ and $G_1=G[V_1]$ with $\Delta(G_1)\leq k-1$. If $k\geq 2$ and $n_0$ is sufficiently large, then
$$
e(G)\leq n_0\cdot n_1 + \left\lfloor\frac{(k-1)n_0}{2}\right\rfloor +1.
$$
Moreover, if the equality holds, then $G\in K_{n_0,n_1}(\mathcal{U}^k_{n_0}(P_{2k-1}),K_2)$.
\end{lemma}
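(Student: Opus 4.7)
My plan would be to prove the contrapositive: assuming $e(G) \geq n_0 n_1 + \lfloor(k-1)n_0/2\rfloor + 1$ and exploiting the $W_{2k+1}$-free hypothesis, I would show that $G$ must lie in $K_{n_0,n_1}(\mathcal{U}^k_{n_0}(P_{2k-1}),K_2)$, which in particular forces equality in the edge bound. The starting point is that $\Delta(G_0), \Delta(G_1) \leq k-1$ give $e(G_0) \leq \lfloor(k-1)n_0/2\rfloor$ and $e(G_1) \leq \lfloor(k-1)n_1/2\rfloor$, so the assumed lower bound on $e(G)$ implies $n_0 n_1 - e(V_0,V_1) \leq \lfloor(k-1)n_1/2\rfloor - 1$. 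Consequently, only $O(kn_1)$ cross pairs are non-edges, and the set $V_1^{*} := \{v \in V_1 : n_0 - d_{V_0}(v) \leq k\}$ of vertices of $V_1$ adjacent to nearly all of $V_0$ satisfies $|V_1^{*}| \geq n_1 - O(1)$.

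I would then split into cases according to $k_1 := \Delta(G_1)$. The case $k_1 = 0$ is immediate because $e(G_1) = 0$ forces $e(G) \leq n_0 n_1 + \lfloor(k-1)n_0/2\rfloor$, contradicting the hypothesis. The key observation for the remaining cases is: for any edge $uv$ of $G_1$ with both endpoints in $V_1^{*}$, the common $V_0$-neighborhood $W = N_{V_0}(u) \cap N_{V_0}(v)$ has $|W| \geq n_0 - 2k$, and any $P_{2k-1}$ inside $G_0[W]$ together with $u$ creates a $C_{2k}$ inside $N(u)$ (since each vertex of $W$ is adjacent to $u$), and hence a $W_{2k+1}$ with hub $u$, a contradiction. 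Therefore $G_0[W]$ is $P_{2k-1}$-free; transferring this across many edges of $G_1$ forces $G_0$ itself to be $P_{2k-1}$-free outside of an $O(1)$-sized exceptional set.

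For $k_1 \geq 2$ I would derive a direct contradiction. A star $S_{k_1+1}$ in $G_1$ whose vertices lie in $V_1^{*}$ has common $V_0$-neighborhood $W$ with $|W| \geq n_0 - O(k^2)$. Lemma~\ref{bound degree path} supplies many vertex-disjoint short stars or paths inside $G_0[W]$ (using $\Delta(G_0) \leq k-1$ and the fact that $e(G_0[W])$ is close to $(k-1)|W|/2$), and splicing such fragments with the bipartite edges from the $k_1$ leaves of the star yields a $C_{2k}$ through the star's center, i.e.\ a $W_{2k+1}$. In the specific subcases $k_1 = k-1$, or $k_1 = 2$ with $k \geq 4$, or $k_1 = 3$ with $k = 5$, one may alternatively appeal to Lemma~\ref{Main Lemma 1} once the $P_{2k-1}$-free argument bounds $e(G_0) \leq \lfloor(k-1-k_1)n_0/2\rfloor + O(1)$.

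This leaves $k_1 = 1$, where the extremal structure emerges and which I expect to be the hardest step. Here $G_1$ is a matching, and $G_0$ is already $P_{2k-1}$-free by the general observation. To show $G_1$ has \emph{exactly} one edge, I would assume it contains two disjoint edges $e_1, e_2$, choose a hub $w \in V_0$ with $d_{G_0}(w) = k-1$, and try to build a $C_{2k}$ inside $N(w) = V_1 \cup N_{G_0}(w)$ that alternates two $V_1$-blocks (each containing one of $e_1, e_2$) with two vertex-disjoint short paths in $G_0[N_{G_0}(w)]$; this would give a $W_{2k+1}$ with hub $w$. A missing cross edge would drop $e(G)$ below the claimed bound, so the bipartite part must be the complete graph $K_{n_0,n_1}$, and saturation of the edge count then forces $G_0 \in \mathcal{U}^k_{n_0}(P_{2k-1})$, yielding $G \in K_{n_0,n_1}(\mathcal{U}^k_{n_0}(P_{2k-1}),K_2)$. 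The main obstacle is the two-matching-edge step: guaranteeing two vertex-disjoint paths of prescribed lengths inside $G_0[N_{G_0}(w)]$ will likely require a clever choice of $w$ and finer structural information about members of $\mathcal{U}^k_{n_0}(P_{2k-1})$ than is immediately at hand.
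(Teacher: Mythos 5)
There is a genuine gap, and it sits exactly where the extremal structure has to emerge. The paper's proof is an induction on $n_1$ (base case $n_1=k+1$), whose engine is the Claim that every $x\in V_1$ satisfies $d_G(x)\geq n_0+1$: if some $x$ has $d_G(x)\leq n_0$, one deletes it and the induction hypothesis both caps $e(G-x)$ and identifies $G-x$ as a member of $K_{n_0,n_1-1}(\mathcal{U}^k_{n_0}(P_{2k-1}),K_2)$, which is the \emph{only} place the extremal configuration is ever recognized. Your proposal has no substitute for this device, and the step you offer instead is false: when $k_1=1$ and $G_1$ is a matching with many edges, a missing cross edge does \emph{not} drop $e(G)$ below the bound, because $e(G_1)$ can be as large as $\lfloor n_1/2\rfloor$ and can compensate for up to $\lfloor n_1/2\rfloor-1$ missing cross pairs while still meeting $n_0n_1+\lfloor (k-1)n_0/2\rfloor+1$. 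So you cannot conclude the bipartite part is complete, and without completeness your hub construction (a $w\in V_0$ with $d_{G_0}(w)=k-1$ adjacent to all of $V_1$) is not available. Incidentally, you also misdiagnose the difficulty there: no vertex-disjoint paths inside $G_0[N_{G_0}(w)]$ are needed at all --- the paper's $C_{2k}$ alternates the $k-1$ vertices of $N_{G_0}(w)$ (used as an independent set) with $k+1$ vertices of $V_1$, two consecutive pairs of which are the matching edges; the only adjacencies required are the cross edges, which is precisely what the degree claim guarantees.

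Two further steps do not hold as written. First, the reduction to Lemma~\ref{Main Lemma 1} requires $e(G_0)\leq \lfloor (k-1-k_1)n_0/2\rfloor+N$, and $P_{2k-1}$-freeness of $G_0$ cannot deliver this: Erd\H{o}s--Gallai only gives $e(G_0)\leq (2k-3)n_0/2$, which is far weaker than $(k-1-k_1)n_0/2$ for any $k_1\geq 0$. The paper instead derives the needed bound case by case from the common neighbourhood $X$ of a maximum star in $G_1$ (e.g.\ $e(G[X])\leq 1$ when $k_1=k-1$, $\Delta(G[X])\leq k-3$ when $k_1=2$), and moreover only invokes Lemma~\ref{Main Lemma 1} when $n_1\geq n_0/2$, handling $n_1\leq n_0/2$ by a direct count; your sketch omits both. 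Second, the assertion $|V_1^*|\geq n_1-O(1)$ does not follow from having $O(kn_1)$ missing cross pairs: dividing by the threshold $k$ only gives $|V_1\setminus V_1^*|=O(n_1)$ (in fact roughly $n_1/2$ at best), so you would still need an argument that the relevant stars and edges of $G_1$ can be chosen inside $V_1^*$. The middle-range case $3\leq k_1\leq k-3$ (and $4\leq k_1=k-2$) of your plan matches the paper's construction of $k_1\cdot P_{k-k_1}$ in $G[X]$ and is sound in outline, but the case $k_1=1$ and the input to Lemma~\ref{Main Lemma 1} are genuine missing ideas rather than routine details.
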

\begin{proof}
We will prove the lemma by induction on $n_1$. Suppose that (\ref{1}) holds and $G$ does not contain $W_{2k+1}$ as a subgraph. It will be shown that $G\in K_{n_0,n_1}(\mathcal{U}^k_{n_0}(P_{2k-1}),K_2)$. We first prove the base case where $n_1=k+1$. Let $V_1=\{x_1,\ldots,x_{k+1}\}$. It is sufficient to prove that $G_1=K_2\cup \overline{K}_{k-1}$. Suppose that either $\Delta(G_1)\geq 2$ or $G_1$ contains a copy of $2\cdot K_2$.
Let $$X=\bigcap_{u\in V_1}N_{G_0}(u).$$
By (\ref{1}) and $\Delta(G_0)\leq k-1$, we have that the number of  non-edges between $V_0$ and $V_1$ is at most ${k+1 \choose 2}$. Thus
$$|X|\geq n_0-{k+1 \choose 2}.$$
Moreover, we have $\Delta(G[X])\leq k-2$. Otherwise, $G$ contains $W_{2k+1}$ as a subgraph, a contradiction. In fact, if there is a vertex $y$ in $X$ with $d_{G[X]}(y)=k-1$, then the induced subgraph of $G$ on vertex set $\{y\}\cup N_{G[X]}(y)\cup V_1$ contains $W_{2k+1}$ as a subgraph (note that $\Delta(G_1)\geq 2$ or $G_1$ contains a copy of $2\cdot K_2$).
Since $n_0$ is sufficiently large, we have
\begin{align*}
e(G)&<(n_0-|X|)(k-1)+\left\lfloor\frac{(k-2)|X|}{2}\right\rfloor+n_0n_1+{k+1 \choose 2}\\
&<n_0n_1+\left\lfloor\frac{(k-1)n_0}{2}\right\rfloor+1,
\end{align*}
a contradiction to (\ref{1}). Now, suppose that the lemma holds for $n_1-1$.

\medskip

\noindent{\bf Claim 1.} For each vertex $x\in V_1$, we have $d_{G}(x)\geq n_0+1$.

\medskip

\begin{proof} If there is a vertex $x\in V_1$ with $d_{G}(x)<n_0+1$, then $e(G-\{x\})\geq n_0(n_1-1)+\lfloor(k-1)n_0/2\rfloor+1$.  Since $G-\{x\}$ does not contain $W_{2k+1}$ as a subgraph, by the induction hypothesis, $G-\{x\}\in K_{n_0,n_1-1}(\mathcal{U}^{\prime}_{n_0,k},K_2)$ and $d_{G}(x)=n_0$. Hence we have $G\in K_{n_0,n_1}(\mathcal{U}^k_{n_0}(P_{2k-1}),K_2)$ by an easy observation.\end{proof}

 Let $\Delta(G_1)=k_1$. We may suppose that $k_1\geq 1$. Otherwise, there is nothing to prove. By Claim 1, for each vertex $x\in V_1$, we have
\begin{equation}\label{eq 2}
d_{G_0}(x)\geq n_0+1-k_1.
\end{equation}
Assume $x\in V_1$ with $d_{G_1}(x)=k_1$. Let $N_{G_1}(x)=\{x_1,\ldots,x_{k_1}\}$ and
 $$X=\bigcap_{u\in\{x,x_1,\ldots,x_{k_1}\}}N_{G_0}(u).$$
 It follows from (\ref{eq 2}) that $$|X|\geq n_0-(k_1+1)(k_1-1).$$
Let $\epsilon<1$. Then we have
$$e(G[X])>\left\lfloor\frac{(k-k_1-2+\epsilon)|X|}{2}\right\rfloor.$$
Otherwise, since $n_0$ is sufficiently large, we have
\begin{align*}
e(G)&\leq\frac{(k-k_1-2+\epsilon)|X|}{2}+(n_0-|X|)(k-1)+\left\lfloor\frac{k_1n_1}{2}\right\rfloor+n_0n_1\\
&<\frac{(k-1)n_0}{2}+n_0n_1+1,
\end{align*}
a contradiction.
Since $n_0$ is sufficiently large, by Lemma~\ref{bound degree path}, $G[X]$ contains $k_1\cdot P_{k-k_1}$ as a subgraph. Let $Y=k_1\cdot P_{k-k_1}$. If either $3\leq k_1\leq k-3$ or $4\leq k_1=k-2$,  then $k_1(k-k_1)+k_1\geq 2k$. Hence,  the subgraph of $G$ induced by  $\{x\}\cup N_{G_1}(x)\cup V(Y)$ contains $W_{2k+1}$ as a subgraph (map the center of the wheel to $x$). We next consider the leftover cases:

\medskip

\noindent{\bf Case 1.} $k_1=k-1$, i.e., $\Delta(G_1)=k-1$. Then there is a vertex $x\in V_1$ with $d_{G_1}(x)=k-1$. Let $N_{G_1}(x)=\{x_1,\ldots,x_{k-1}\}$ and
 $$X=\bigcap_{u\in\{x,x_1,\ldots,x_{k-1}\}}N_{G_0}(u).$$
By (\ref{eq 2}), we have
$$|X|\geq n_0-k(k-2).$$
Moreover, we have $e(G[X])\leq1$. Otherwise $G$ contains $W_{2k+1}$ as a subgraph. Hence we have $e(G_0)\leq k(k-1)(k-2)+1.$ If $n_1< n_0/2$, since $n_0$ is sufficiently large, then
\begin{align*}
e(G)&\leq k(k-1)(k-2)+1+\left\lfloor\frac{(k-1)n_1}{2}\right\rfloor+n_0n_1\\
&\leq k(k-1)(k-2)+1+\left\lfloor\frac{(k-1)n_0}{4}\right\rfloor+n_0n_1\\
&<\frac{(k-1)n_0}{2}+n_0n_1+1,
\end{align*}
a contradiction to (\ref{1}). If $n_1\geq n_0/2$, then the result follows from Lemma~\ref{Main Lemma 1}.

\medskip

\noindent{\bf Case 2.} $k_1=1$, i.e., $\Delta(G_1)=1$. Let $x$ be a vertex in $G_1$. By Claim 1, we have $d_{G_1}(x)=1$  and $d_{G_0}(x)=n_0$. Hence we have $\Delta(G_0)\leq k-2$. Otherwise, there is a vertex $y\in G_0$ with $|N_{G_0}(y)|=k-1$, and the subgraph of $G$ induced by $\{y\}\cup N_{G_0}(y)\cup V_1$ contains $W_{2k+1}$ as a subgraph (note that $G_1$ contains $2\cdot K_2$ as a subgraph). Thus
\begin{align*}
e(G)&\leq n_0n_1+\left\lfloor\frac{(k-2)n_0}{2}\right\rfloor+\left\lfloor\frac{n_1}{2}\right\rfloor\\
&<\frac{(k-1)n_0}{2}+n_0n_1+1,
\end{align*}
a contradiction.

\medskip

\noindent{\bf Case 3.} $k_1=2$ and $k\geq4$, i.e., $\Delta(G_1)=2$. As $d_{G}(y)\geq n_0+1$ for any $y\in V_1$, there are no isolated vertices in $G_1$. We consider the following two subcases:

\medskip

{\bf Subcase 3.1.} $G_1$ contains either $2\cdot P_3$ or $P_4\cup K_2$ as a subgraph. Let $x_1,\ldots,x_6$ be the vertices of $2\cdot P_3$ or $P_4\cup K_2$. We choose arbitrary $k-4$ vertices, say $x_7,\ldots,x_{k+2}$, from $V(G_1)\setminus \{x_1,\ldots,x_6\}$. Let
 $$X=\bigcap_{u\in\{x_1,\ldots,x_{k+2}\}}N_{G_0}(u).$$
Since $d_{G}(y)\geq n_0+1$ and $\Delta(G_1)=2$, we have
$$|X|\geq n_0-(k+2).$$
Moreover, we have $\Delta(G[X])\leq k-3$. Otherwise, if there is a vertex $x\in X$ with $d_{G[X]}(x)\geq k-2$, then the  subgraph of $G$ induced by $\{x\}\cup N_{G[X]}(x)\cup \{x_1,\ldots,x_{k+2}\}$ contains $W_{2k+1}$ as a subgraph, a contradiction. If $n_1<n_0/2$, then
$$e(G)\leq \lfloor(k-3)|X|/2\rfloor+(k+2)(k-1)+n_1+n_0n_1<\lfloor(k-1)n_0/2\rfloor+n_0n_1+1,$$
a contradiction. If $n_1\geq n_0/2$, by Lemma~\ref{Main Lemma 1},  then we have $G\in K_{n_0,n_1}(\mathcal{U}^k_{n_0}(P_{2k-1}),K_2)$.

\medskip

{\bf Subcase 3.2.} $G_1$ does not contain $2\cdot P_3$ or $P_4\cup K_2$ as a subgraph. It is easy to see that $P_3\cup (n_1-3)/2\cdot K_2   \subseteq G_1\subseteq K_3\cup (n_1-3)/2 \cdot K_2$ and $n_1$ is odd (recall that there are no isolated vertices in $G_1$). Let $P_3$ be a subgraph of $G_1$ with $V(P_3)=\{x_1,x_2,x_3\}$ and
 $$X=\bigcap_{u\in\{x_1,x_2,x_3\}}N_{G_0}(u).$$
Since $d_{G_0}(y)\geq n_0+1$, we have
$$|X|\geq n_0-3.$$
Moreover, we have $e(G[X])\leq \lfloor(k-3+\epsilon)|X|/2\rfloor$, where $0<\epsilon<1/2$. Otherwise, by Lemma~\ref{bound degree path}, $G[X]$ contains $2\cdot P_{k-1}$ as a subgraph. Hence $G$ contains $ W_{2k+1}$ as a subgraph, a contradiction. Therefore, by $|X|\geq n_0-3$, we have
\begin{align*}
e(G)&\leq \left(3(k-1)+\left\lfloor\frac{(k-3+\epsilon)|X|}{2}\right\rfloor\right)+\left(\left\lceil\frac{n_1}{2}\right\rceil+1\right)+n_0n_1\\
&<\left\lfloor\frac{(k-1)n_0}{2}\right\rfloor+n_0n_1+1,
\end{align*}
a contradiction.

\medskip

\noindent{\bf Case 4.} $k_1=k-2=3$, i.e, $\Delta(G_1)=3$ and $k=5$. Let $x\in V_1$ with $d_{G_1}(x)=3$. We assume $N_{G_1}(x)=\{x_1,x_2,x_3\}$ and define
 $$X=\bigcap_{u\in\{x,x_1,x_2,x_3\}}N_{G_0}(u).$$
Since $d_{G_0}(y)\geq n_0+1$, we have
$$|X|\geq n_0-8.$$
We claim that $G[X]$ does not contain $2\cdot P_3$ as a subgraph. Otherwise, the subgraph of $G$  induced by $V(2\cdot P_3)\cup \{x,y,x_1,x_2,x_3\}$ contains a copy of $W_{11}$ in $G$, where $y\in X$, a contradiction. Hence, by $\Delta(G_0)\leq 4$, we have $e(G[X])\leq 4\times 3 + \lfloor(|X|-3)/2\rfloor\leq \lfloor|X|/2\rfloor+6$. If $n_1< n_0/2$, then
\begin{align*}
e(G)&\leq \left\lfloor\frac{|X|}{2}\right\rfloor+6+8\times 4 +\left\lfloor\frac{3n_1}{2}\right\rfloor+n_0n_1\\
&<2n_0+n_0n_1+1,
\end{align*}
 a contradiction. If $n_1\geq n_0/2$, then we have $G\in  K_{n_0,n_1}(\mathcal{U}^k_{n_0}(P_{2k-1}),K_2)$  by Lemma~\ref{Main Lemma 1}. The proof is complete.\end{proof}

\section{Proof of Theorem~\ref{main}}

Recall that $f(n,k)=\max\{n_0(n-n_0)+\frac{(k-1)n_0}{2}+1:n_0+n_1=n\}$. We begin with a proposition about $f(n,k)$.

\begin{proposition}\label{pro f(n,k)}
Let $N$ be an even integer. Then
$$f(n,k)=f(n-2N,k)+t(2N,2)+(n-2N)N+\frac{(k-1)N}{2}$$
and
$$f(n,k)> f(n-1,k)+\frac{n}{2}+\frac{k}{4}-1.$$
\end{proposition}

\begin{proof} A basic calculation shows that $f(n,k)$ attains its maximum when

$$
n_0=\left\{
  \begin{array}{ll}
    \frac{2n+k-1}{4}, & \hbox{for $2n+k-1 \equiv0$ mod $4$;} \\
   \frac{2n+k-2}{4}, & \hbox{for $2n+k-1 \equiv1$ mod $4$;} \\
    \frac{2n+k-3}{4},\frac{2n+k+1}{4} & \hbox{for $2n+k-1 \equiv2$ mod $4$;} \\
    \frac{2n+k}{4}, & \hbox{for $2n+k-1 \equiv3$ mod $4$.}
  \end{array}
\right.
$$
\noindent Since $n-2N \equiv n$ mod 4, we have \begin{eqnarray*}
f(n,k)&=&n_0(n-n_0)+\frac{(k-1)n_0}{2}+1\\
&=&(n_0-N)(n-n_0-N)+(n-N)N+\frac{(k-1)(n_0-N)}{2} +\frac{(k-1)N}{2}+1\\
&=&f(n-2N)+t(2N,2)+(n-2N)N+\frac{(k-1)N}{2}.
\end{eqnarray*}
If $2n+k-1 \equiv 0$ mod 4, then

\begin{eqnarray*}
f(n,k)-f(n-1,k)&=& \frac{2n+k-1}{4}\left(n-\frac{2n+k-1}{4}\right)+\frac{(k-1)}{2}\frac{2n+k-1}{4}+1\\
&&-\frac{2(n-1)+k+1}{4}\left(n-1-\frac{2(n-1)+k+1}{4}\right)\\
&&-\frac{(k-1)}{2}\frac{2(n-1)+k+1}{4}-1\\
&=&\frac{n}{2}+\frac{k}{4}-\frac{1}{4}\\
&>&\frac{n}{2}+\frac{k}{4}-1.
\end{eqnarray*}
Similarly, we can prove the proposition when $2n+k-1 \equiv i$ mod 4 for $i\neq 0$.\end{proof}

Let $k\geq 2$. Let $\mathfrak{U}_n$ be the set of $n$-vertex extremal graphs for $W_{2k+1}$. We choose an arbitrary graph $L_n\in \mathfrak{U}_n$. Since the graphs in $\mathcal{K}_n^k$ are $W_{2k+1}$-free, we have
\begin{equation}\label{5}e(L_n)\geq f(n,k).\end{equation}
We define
\begin{equation}\label{6}
\phi(L_n)=e(L_n)-f(n,k).
\end{equation}
Then, $\phi(L_n)$ is a non-negative integer. We will  prove the theorem  by the progressive induction, where $B$ is the property defined on $\mathfrak{U}=\cup_{i=1}^\infty\mathfrak{U}_i$ such that if $G\in \mathfrak{U}_i$, then we have $G\in\mathcal{K}_i^k$. According to the lemma of progressive induction, it is enough to show that if $L_n\notin \mathcal{K}_n^k$, then there exists an $n^\prime$ such that $n/2<n^{\prime}<n$ and $\phi(n^{\prime})>\phi(n)$ provided $n$ is sufficiently large.

By Theorem~\ref{erdos-stone} and (\ref{5}), there is an $n_1$ such that if $n>n_1$, then  $L_n$ contains $T(2N,2)$ as a subgraph, where $N$ is an even large constant depending on Lemma~\ref{main lemma}. We define $\epsilon=\frac{2k}{N}$. Let $B_1$ and $B_2 $ be two partite sets of $T(2N,2)$.  Since $L_n$ does not contain $W_{2k+1}$ as a subgraph, we have $\Delta(L_n[B_i])\leq k-1$ for $i=1,2$. Let $A=V(L_n)-V(T(2N,2)$. If $x\in A$, then there exists an $i(x)\in\{1,2\}$ such that $x$ is adjacent to less than $k$ vertices of $B_{i(x)}$. Indeed, if $x$ is adjacent to at least $k$ vertices of each $B_{i}$, then the subgraph of $L_n$ induced by $x\cup N_{L_n}(x)$ contains $W_{2k+1}$ as a subgraph, a contradiction. We next partition $A$ into three subsets $C_1$, $C_2$ and $D$. If $x$ is adjacent to less than $k$ vertices of $B_i$ and more than $(1-\epsilon)N$ vertices of $B_{3-i}$, then we put $x\in C_i$ for $i=1,2$. If $x\in D$, then $x$ is adjacent to at most $(1-\epsilon)N$ vertices of $B_i$ for $i=1,2$.

Moreover, we claim that $\Delta(L_n[B_i\cup C_i])\leq k-1$. Suppose that there is a vertex $x\in B_i\cup C_i$ with $d_{L_n[B_i\cup C_i]}(x)\geq k$. Choose $X=\{x_1,x_2,\ldots,x_k\}\subseteq d_{L_n[B_i\cup C_i]}(x)$. Then the common neighbours of $\{x\}\cup X$ in $B_{3-i}$ is at least $N-(k+1) \epsilon N= N- 2k^2-2k>k$ (recall that $N$ is a large constant). Thus  $L_n$ contains $W_{2k+1}$ as a subgraph, a contradiction.

Denote by $e_L$ the number of edges joining $A$ and $B_1\cup B_2$. We have
\begin{equation}\label{8}
e(L_n)=e(L_n[A])+e_L+e(L_n[B_1\cup B_2]).
\end{equation}

We choose an $L^\prime_n\in \mathcal{K}_n^k$ such that there is a component on $N$ vertices in the larger partite set of $L_n^\prime$ (since $N\geq 2k$ and $n$ is sufficiently large, there exists such a graph in $\mathcal{K}_n^k$ by Proposition~\ref{k-1 regular no C2k}). Hence we can choose two vertex sets $B^\prime_1$ and $B^\prime_2$ of different classes of $L^\prime_n$ such that $L_n^\prime[B^\prime_1]\in \mathcal{U}^k_{N}(P_{2k-1})$ and $L_n^\prime[B^\prime_2]=\overline{K}_N$. Let $A^\prime=V(L^\prime_n)\setminus (B^\prime_1\cup B^\prime_2).$ Denote by $e_{L^\prime}$ the number of edges joining $A^\prime$ and $B^\prime_1\cup B^\prime_2$. By Proposition~\ref{pro f(n,k)}, we have
\begin{equation}\label{9}
e(L^\prime_n)=e(L^\prime_{n}[A^\prime])+e_{L^\prime}+e(L_N^\prime[B^\prime_1\cup B^\prime_2])=f(n-2N,k)+e_{L^\prime}+e(L_N^\prime[B^\prime_1\cup B^\prime_2]).
\end{equation}
Obviously, we have $e_{L^\prime}=(n-2N)N$.

Since $\Delta(L_n[B_i])\leq k-1$ for $i=1,2$ and $L_n[B_1\cup B_2]$ does not contain $W_{2k+1}$ as a subgraph, by Lemma~\ref{main lemma}, we have
\begin{equation}\label{8.1}
e(L_n[B_1\cup B_2])\leq e(L^\prime_n[B^\prime_1\cup B^\prime_2])+1.
\end{equation}
Let $L_{n-2N}$ be an extremal graph for $W_{2k+1}$ on $n-2N$ vertices. Since $L_n[A]$ is $W_{2k+1}$-free on $n-2N$ vertices, we have
\begin{equation}\label{8.2}
e(L_n[A])\leq e(L_{n-2N}).
\end{equation}
Combining (\ref{8}), (\ref{9}), (\ref{8.1}), and (\ref{8.2}), we  have
\begin{align*}
\phi(n)&=e(L_n)-e(L^\prime_n)
\\&=e(L_n[A])-f(n-2N,k)+(e_{L}-e_{L^\prime})+e(L_n[B_1\cup B_2])-e(L_N^\prime[B^\prime_1\cup B^\prime_2])
\\&\leq e(L_{n-2N})-f(n-2N,k)+(e_{L}-e_{L^\prime})+1
\\&=(e_{L}-e_{L^\prime})+\phi(n-2N)+1.
\end{align*}
Thus
\begin{equation}\label{10}
\phi(n)\leq(e_{L}-e_{L^\prime})+\phi(n-2N)+1.
\end{equation}
It will be proved that if $n$ is large enough, then\\
(a) either $\phi(n)<\phi(n-2N)$;\\
(b) or $\phi(n)<\phi(n-1)$;\\
(c) or $L_n\in \mathcal{K}_n^k$.\\
This will complete our proof.

If there is a vertex $x\in L_n$ with $d_{L_n}(x)<n/2+k/4-1$, then we claim $\phi(n)<\phi(n-1)$. In fact, the subgraph $L_n^\ast=L_n-\{x\}$ is $W_{2k+1}$-free. Thus $e(L_n)-d_{L_n}(x)=e(L_n^\ast)\leq e(L_{n-1})$ and from this
$e(L_n)-e(L_{n-1})\leq d_{L_n}(x)<n/2+k/4-1$. By Proposition~\ref{pro f(n,k)}, $e(L^\prime_n)-e(L^\prime_{n-1})=f(n,k)-f(n-1,k)\geq n/2+k/4-1$, we have $\phi(n)=e(L_n)-e(L^\prime_n)< e(L_{n-1})-e(L^\prime_{n-1})=\phi(n-1)$.

Suppose now that neither (a) nor (b) holds. Then  each $x\in L_n$ has degree at least $n/2+k/4-1$  and $\phi(n)\geq \phi(n-2N)$.
From (\ref{10}), we have $0\leq\phi(n)-\phi(n-2N)\leq e_L-e_{L^\prime}+1$.

We shall prove $L_n\in \mathcal{K}_n^k$.

\medskip

\noindent{\bf Claim 1:} There exists an $N_1$ such that $|D|\leq N_1$.

\medskip

\begin{proof} First recall that $\Delta(G[B_i\cup C_i])\leq k-1$ and  $d_{G[B_1\cup B_2]}(x)\leq k+(1-\epsilon)N\leq N-k$ for each $x\in D$. Therefore, the number of edges joining $B_i$ and $C_i$ is at most $N(k-1)$ and then
\begin{equation*}\label{11}
e_{L}\leq (n-2N-|D|)N+2N(k-1)+|D|(N-k)=e_{L^\prime}+2N(k-1)-|D|k.
\end{equation*}
We have
$$|D|\leq \frac{1}{k}(e_{L^\prime}-e_L+2N(k-1))\leq\frac{1}{k}(2N(k-1)+1)= N_1,$$ and hence the proof is complete.\end{proof}

\noindent{\bf Claim 2:} Each vertex from $B_i\cup C_i$ is adjacent to at most  $k-1$ other vertices of $B_i \cup C_i$ for $i=1,2$.

\medskip

\begin{proof} This claim was already proved.\end{proof}

\noindent{\bf Claim 3:} For $i=1,2$, $|B_i\cup C_i|=n/2+O(\sqrt{n}).$

\medskip

\begin{proof} In order to show this, we omit the edges inside $B_i\cup C_i$ ($i=1,2$) and the edges incident to $D$. Then the resulting graph
is $L_{n-|D|}$ which is $2$-chromatic and has $t(n,2)-O(n)$ edges. Thus there is a constant $N_2$ such that
$$\left||B_i\cup C_i|-\frac{n}{2}\right|\leq N_2 \sqrt{n},$$ and the proof is complete.\end{proof}

\noindent{\bf Claim 4:} For $i=1,2$, there is a constant $N_3$ such that every $x\in B_i\cup C_i$ is adjacent to all but at most $N_3 \sqrt{n}$ vertices of $L_n -(B_i\cup C_i)$.

\medskip

\begin{proof} This follows immediately from the fact that $d_{G[B_i\cup C_i]}(x)\leq k-1$, $ d_{L_n}(x)\geq n/2+k/4-1 $ and Claim 3.\end{proof}

For $i=1,2$, let $D_i$ be the set of  vertices which have at most $k-1$ neighbours in $B_i\cup C_i$.

\medskip

\noindent{\bf Claim 5:} $D$ is the disjoint union of $D_1$ and $D_2$.

\medskip

\begin{proof} In fact, if $x\in D$, then there is an $i(x)$ such that $x$ is adjacent to at least $(1/3)(n/2)$ vertices of $B_{i(x)}\cup C_{i(x)}$. Otherwise $d_{L_n}(x)<O(\sqrt{n})+(2/3)(n/2)<n/2 $, a contradiction. Furthermore, $x$ is adjacent to less than $k$ vertices of $B_{3-i(x)}\cup C_{3-i(x)}$. Otherwise $L_n$ contains $W_{2k+1}$ as a subgraph. In fact, without loss of generality, let $i(x)=2$, we select $k$ vertices of $B_1\cup C_1$ joining to $x$. Then select $k$ vertices in $B_2\cup C_2$ joining to $x$ and to the $k$ vertices already chosen in $B_1\cup C_1$. This is possible since each vertex selected from $B_1\cup C_1$ is adjacent to at least $(n/2)-O(\sqrt{n})$ of $B_{2}\cup C_{2}$ and $x$ is adjacent to at least $(1/3)(n/2)$ vertices of $B_{2}\cup C_{2}$. Thus
$G$ contains a copy of $W_{2k+1}$, a contradiction. We finish the proof of this claim.\end{proof}

Let $V_1=B_1\cup C_1 \cup D_1$ and $V_2=B_2\cup C_2 \cup D_2$. By Claim 5, $V_1\cup V_2$ is a vertex partition of $L_n$. Note that each $x\in L_n$ has degree at least $n/2+k/4-1$ and each vertex in $D_i$ is adjacent to $n/2+O(\sqrt{n})$ vertices of $V_{3-i}$. It is easy to see that $\Delta(L_n[V_i])\leq k-1$ for $i=1,2$. Hence by Lemma~\ref{main lemma} and (\ref{5}), we have $L_n \in \mathcal{K}_n^k$. We finish the proof of the theorem. \hfill $\square$\par\medskip

\remark We can also determine the Tur\'{a}n numbers of $K_1+P_{2k}$ and $K_1+P_{2k+1}$ by similar arguments of the proof of Theorem~\ref{main}.

\section{Proof of Theorem~\ref{MAIN}}\label{proof of NIM}

\subsection{Lemma for Theorem~\ref{MAIN}}

In this section, we will consider a complete graph $K_n$ with a red/blue-edge-coloring. For any two vertices $u$ and $v$, call $u$ a red (blue) neighbor of $v$ if the edge $uv$ is colored red (blue). If an edge $e$ is not contained in any monochromatic copy of a given graph $H$, then we call $e$ being \textit{NIM-$H$}. If a subgraph $G$ of $K_n$ consists of NIM-$H$ edges, then we call $G$ being NIM-$H$.

We need the following well-known theorems. The first one due to Ramsey is one of the most important results in combinatorics.

\begin{theorem}(Ramsey).\label{Ramsey} For every $t$ there exists $N = R(t)$ such that every $2$-coloring of the edges of
$K_N$ has a monochromatic $K_t$ subgraph.
\end{theorem}

In 1954, K\"{o}v\'{a}ri, S\'{o}s, and Tur\'{a}n proved the following theorem.

\begin{theorem}(K\"{o}v\'{a}ri, S\'{o}s, and Tur\'{a}n \cite{Kovari1954}).\label{KST}
$$\emph{ex}(n,K_{t,t})=O(n^{2-\frac{1}{t}}). $$
\end{theorem}

Now we are ready to prove the following lemma. Although this lemma is enough for the proof of our theorem with $p=2$, we prove it with $p\geq2$ for further research.

\begin{lemma}\label{Lemma-for-monochromatic-T(n,p)}
Let $p\geq 2$, $t\geq 1$ be given integers and $H$ be a given graph. Then there exists an integer $n_0=n_0(p,t,H)$ such that if $G$ is an NIM-$H$ graph on $n\geq n_0$ vertices containing at least $t(n,p)$ edges, then $G$ contains a blue (or red) copy of $T(tp,p)$ such that the edges inside each class are red (or bule), where $t\geq |V(H)|$.
\end{lemma}
\begin{proof} Since $G$ is an NIM-$H$ graph on $n$ vertices containing at least $t(n,p)$ edges, by  Theorem~\ref{erdos-stone}, $G$ contains $T(Np,p)$ as a subgraph with a vertex partition $V_1\cup \ldots \cup V_p$, where $N$ is a large constant depending on Theorems~\ref{Ramsey},~\ref{KST}, and $p$.
\medskip

\noindent{\bf Claim 1.} There exists a constant $m(t)$ depending on $t$ such that for any two disjoint vertex sets $U,V$ of $K_n$ with $|V|=|U|=m(t)$, there is a monochromatic copy of $K_{t,t}$ between $U$ and $V$.

\medskip

\begin{proof} Without lose of generality, suppose that there are at least $\frac{1}{2}m(t)^2$ red edges between $U$ and $V$. Since $\frac{1}{2}m(t)^2\geq O(2m(t))^{2-\frac{1}{t}}$ when $m(t)$ is large, the result follows from Theorem~\ref{KST}.\end{proof}

\noindent{\bf Claim 2.}  Any two monochromatic copies of $K_\ell$ with $\ell\geq m(t)$ in different classes of $T(Np,p)$ have the same color.

\medskip

\begin{proof} Let $\ell\geq m(t)$. Suppose that there are a red copy of $K_\ell$ in $V_1$ and a blue copy of $K_\ell$ in $V_2$. Then it follows from Claim 1 that there is a monochromatic copy of $K_{t,t}$ between the red $K_\ell$ and the blue $K_\ell$. Since $t\geq |V(H)|$, the edges of $K_{t,t}$ are contained in a monochromatic copy of $H$, contradicting that $G$ is an NIM-$H$ graph. The proof is complete.\end{proof}

Applying Theorem~\ref{Ramsey}, there is monochromatic copy of $K_{m_0(t)}$ in each class of $T(Np,p).$ By Claim 2, those $p$ monochromatic copies of $K_{m_0(t)}$ have the same color, say red. Let $G^\prime$ be the subgraph of $G$ induced by those $p$ copies of $K_t$. Then $G^\prime$ has a vertex partition $V(G^\prime)=V^\prime_1\cup \ldots \cup V^\prime_p$ such that $V_i^\prime\subset V_i$ for each $i\in[p]$. We say a pair of disjoint vertex sets is monochromatic (red/blue) if all the edges between them have the same color (red/blue).

We will find a copy of $T(tp,p)$ with the property what we need by defining a sequence of graphs. Let $G^0=G^\prime$. By Claim 1, we can define $G^{i+1}$ from $G^i$ by taking $m_{i+1}(t)$ vertices of each class of $G^i$ such that the number of  monochromatic  pairs in the classes of $G^{i+1}$ is least $i+1$, where $m_{i+1}(t)<m_{i}(t)$ is a sufficiently large constant depending Theorem~\ref{KST}. Note that there are ${p \choose 2}$ pairs of vertex sets between $V_1,\ldots,V_p$. Since the edges between different classes of $G^{p \choose 2}$ are NIM-edges, $G^{p \choose 2}$ has the property needed in the lemma with $t=m_{p \choose 2}(t)$. The proof is complete.\end{proof}

\subsection{Proof of Theorem~\ref{MAIN}}
\begin{proof} Let $\mathfrak{U}_i$ be the set of NIM-$W_{2k+1}$ graphs of $K_i$ with maximum number of NIM-$W_{2k+1}$ edges for $i\in[n]$. We will prove the theorem by progressive induction, where $B$ is the property defined on $\mathfrak{U}=\cup_{i=1}^\infty\mathfrak{U}_i$ such that if $G\in \mathfrak{U}_i$, then we have $e(G)=f(n,k)$. This would imply that all the extremal colorings for Theorem~\ref{MAIN} are the colorings defined at the end of Section~\ref{extrmel-graphs-for-wheel}.

Let $G_n\in \mathfrak{U}_n$. Note that if we color the edges of an extremal $n$-vertex graph for $H$ red and color the complement of it blue, then we can see that $g(n,H)\geq\mbox{ex}(n,H)$ for any $H$ and $n$. Hence, we have $e(G_n)\geq\mbox{ex}(n,W_{2k+1})\geq t(n,2)$. Applying Lemma~\ref{Lemma-for-monochromatic-T(n,p)} with $p=2$, there is a blue copy of $T(2N,2)$ in $G_n$ with a vertex partition $B_1\cup B_2$ such that the edges of $K_n[B_1]$ and $K_n[B_2]$ are red, where $N$ is a sufficiently large constant depending on Lemma~\ref{main lemma}. Let $\epsilon:=\epsilon(N,k)$ be a small constant. The proof is essentially the same as the proof of Theorem~\ref{main}. In the following, we briefly sketch the proof of Theorem~\ref{MAIN}.

Let $A=V(G_n)-V(T(2N,2))$ and $\epsilon$ be a small real number. We shall show that if $x\in A$ and $x$ is adjacent to more than $(1-\epsilon)N$ vertices of $B_i$ in $G_n$, then $x$ is adjacent to at most $k-1$ vertices of $B_{3-i}$ in $K_n$. Let $x\in A$ be adjacent to $B_i$ by more than $(1-\epsilon)N$ edges in $G_n$. Then at least $(1-\epsilon)N-2k$ of them are blue. Otherwise, there is a red copy of $W_{2k+1}$ in $G_n$, a contradiction to that $G_n$ is an NIM-$W_{2k+1}$ graph. Hence, there are at most $k-1$ blue edges between $x$ and $B_{3-i}$ in $K_n$. Otherwise, there is a blue copy of $W_{2k+1}$ in $K_n$ using edges of $G_n$, a contradiction. Thus all other edges between $x$ and $B_{3-i}$  are red and not NIM-edges (note that each edge in $K_n[B_i]$ is red).

We can partition the vertices of $A$ into the following classes:
$C_1,C_2,D$ such that if $x\in C_i$ then $x$ is adjacent to less than $k$ vertices of $B_i$ and more than $(1-\epsilon)N$ vertices of $B_{3-i}$ for $i=1,2$, if $x\in D$ then $x$ is adjacent to at most $(1-\epsilon)N$ vertices of each of two of $B_1,B_2$.

Moreover, similarly to the proof of Theorem~\ref{main}, for each $x\in B_i\cup C_i$, there are at most $k-1$ blue edges between $x$ and $B_i\cup C_i$ for $i=1,2$. Thus all other edges between $x$ and $B_i\cup C_i$ are red and not NIM-edges (note that each red edge in $B_i\cup C_i$ has at least $N-2k$ common neighbours in $B_i$ and each edge in $B_i$ is red).

Applying the progressive induction,  we have the following claims.

\medskip

\noindent{\bf Claim 1:} There exists an $N_1$ such that $|D|\leq N_1$.

\medskip
\noindent{\bf Claim 2:} A vertex belonging to $B_i\cup C_i$ is adjacent at most to $k-1$ other vertices of $B_i \cup C_i$ for $i=1,2$.

\medskip
\noindent{\bf Claim 3:} $|B_i\cup C_i|=n/2+O(\sqrt{n}),$ for $i=1,2$.

\medskip
\noindent{\bf Claim 4:} There is a constant $N_3$ such that every $x\in B_i\cup C_i$ is adjacent to all the vertices of $V(L_n)\setminus(B_i\cup C_i)$ but at most than $N_3 \sqrt{n}$ vertices, $i=1,2$.

\medskip

\noindent{\bf Claim 5:} Let $D_i$ be the class of those vertices which are adjacent to $B_i\cup C_i$ by less than $k$ edges for $i=1,2$. Then $D$ is the disjoint union of $D_1, D_2$.

\medskip

Let $V_i=B_i\cup C_i \cup D_i$. Then, repeating the proof of Theorem~\ref{main}, we have $\Delta(G_n[V_i])\leq k-1$. Moreover, edges in $G_n[V_i]$ are blue. The result follows from Lemma~\ref{Main Lemma 1}, $e(G_n)\geq\mbox{ex}(n,W_{2k+1})$ and progressive induction. Moreover, the extremal colorings are determined.\end{proof}

\end{document}